\documentclass[12pt]{article}
\usepackage[utf8]{inputenc}

\usepackage{amsfonts}
\usepackage{amssymb}
\usepackage{amsmath}
\usepackage{amsthm}
\usepackage{amscd}
\usepackage{mathrsfs}
\usepackage{pbox}
\usepackage[T2A]{fontenc}

\usepackage[english]{babel}

\usepackage{makecell}%To keep spacing of text in tables
\setcellgapes{4pt}%parameter for the spacing

\usepackage{amsthm}

\def \le {\leqslant}
\def \ge {\geqslant}

\topmargin -2cm \advance\voffset by 0in

\oddsidemargin -1cm \hoffset 0cm

\textheight = 24.5cm \textwidth 18.0cm \columnsep 10pt \columnseprule 0pt 

\newcommand{\centered}[1]{\begin{tabular}{l} #1 \end{tabular}}

\theoremstyle{plain}
\newtheorem{theorem}{Theorem}

\newtheorem{lem}{Lemma}
\newtheorem{foll}{Corollary}

\addto\captionsrussian{}
\addto\captionsrussian{}
\newcommand\blfootnote[1]{%
  \begingroup
  \renewcommand\thefootnote{}\footnote{#1}%
  \addtocounter{footnote}{-1}%
  \endgroup
}

\begin{document}

\blfootnote{\textit{2010 Mathematics Subject Classification}:11J06.}
\blfootnote{\textit{Key words and phrases}: Continued fractions, Diophantine approximation.}
\begin{center}
\textsc{\Large Discrete part of the second Lagrange spectrum}\\ 

\,
\,
\large Dmitry Gayfulin\footnote{This work was supported by the Russian Science Foundation under grant No. 22-41-05001.}
\end{center}

\begin{abstract}
\noindent
Given an irrational number $\alpha$ consider its irrationality measure function $\psi_{\alpha}(t)=\min\limits_{1\le q\le t, q\in\mathbb{Z}}\|q\alpha\|$. The set of all values of $\lambda(\alpha)=(\limsup\limits_{t\to\infty} t\psi_{\alpha}(t))^{-1}$ where $\alpha $ runs through the set $\mathbb{R}\setminus\mathbb{Q}$ is called the Lagrange spectrum $\mathbb{L}$. In the paper \cite{Moshchevitin} an irrationality measure function $\psi^{[2]}_{\alpha}(t)=\min\limits_{1\le q\le t, q\in\mathbb{Z},q\ne q_i}\|q\alpha\|$  was introduced. In other words, we consider the best approximations by fractions, whose denominators are not the denominators of the convergents to $\alpha$. Replacing the function $\psi_{\alpha}$ in the definition of $\mathbb{L}$ by $\psi^{[2]}_{\alpha}$, one can get a set $\mathbb{L}_2$ which is called the ''second'' Lagrange spectrum. In this paper we give the complete structure of discrete part of $\mathbb{L}_2$.
\end{abstract}

\vskip+1cm
\section{Introduction}

\vskip+0.3cm
Consider an arbitrary irrational number $\alpha$. Denote by $a_i,\ i\in\mathbb{Z}_{\ge0}$ the partial quotients of its continued fraction expansion 
\begin{equation}\label{q}
\alpha = [a_0; a_1, a_2, \ldots] = a_0 + \cfrac{1}{a_1+\cfrac{1}{a_2+\ldots}}, ~a_0\in\mathbb{Z}, ~a_i\in\mathbb{Z}_{+}, ~i = 1, 2, \ldots.
\end{equation}
Let us denote by $\frac{p_n}{q_n}$ the $n$-th convergent fraction to $\alpha$, i.e. $\frac{p_n}{q_n}:=[a_0;a_1,\ldots,a_n]$. Define $\mathcal{Q}_{\alpha}:=\{q_1,q_2,\ldots\}$ to be the sequence of denominators of the convergents to $\alpha$.
The irrationality measure function of $\alpha$ is defined as
$$
\psi_{\alpha}(t):=\min\limits_{1\le q \le t, q\in\mathbb{Z}} \|q\alpha\| ,\,\,\,\,\,\text{where}\,\,\,\,\,  \|x\| = \min\limits_{n\in\mathbb{Z}}|x-n|.
$$ 
By Lagrange's theorem on best approximations (see \cite{khinchin1964}), $\psi_{\alpha}(t)$ is a piecewise constant function, moreover
$$ \psi_{\alpha}(t) = |q_{n}\alpha - p_{n}|=\|q_n\alpha\|
\,\,\,\,\,\text{
for}
\,\,\,\,\,
q_n\le t < q_{n+1}.
$$
Let
$$
\alpha_n:=[a_n;a_{n+1},\ldots],\quad \alpha_n^{*}:=[0;a_n,a_{n-1},\ldots,a_1].
$$
We say that two irrational numbers $\alpha$ and $\beta$ are \textit{equivalent} if $\alpha_n=\beta_m$ for some $n, m\in\mathbb{N}$. It is well known fact, usually called Perron's formula, that for any $n\in\mathbb{N}$ one has
$$
\|q_n\alpha\|=\frac{1}{q_n(\alpha_{n+1}+\alpha_n^*)}.
$$
See \cite{Cusick}, Appendix 1 for details. The Lagrange constant of an irrational number $\alpha$ is defined as
 \begin{equation}
 \label{lagconstdef}
 \lambda(\alpha):=\limsup_{t\to \infty} (t\cdot \psi_\alpha (t))^{-1}=\limsup_{t\to \infty}(\alpha_{t+1}+\alpha_t^*).
 \end{equation}
 The set of values $\lambda(\alpha)$ as $\alpha$ runs through the set of all irrational numbers forms the
 Lagrange spectrum $\mathbb{L}$
\begin{equation}
\label{lagrdef}
\mathbb{L}:=\{\lambda ~|~\exists\alpha\in\mathbb{R\setminus Q} \colon \lambda = \lambda(\alpha)  \}.
\end{equation}
One can find a nice and detailed survey of the results about the Lagrange spectrum up to 1990s in a book \cite{Cusick}.
\\
\textbf{Remark.} Some authors, for example \cite{Moshchevitin}, define the Lagrange constant $\lambda(\alpha)$ equal to $\liminf\limits_{t\to \infty} t\cdot \psi_\alpha (t)$. Therefore, the Lagrange spectrum, defined by these authors, contains the reciprocals of the elements of $\mathbb{L}$ from our definition. 

In \cite{Moshchevitin} Moshchevitin considered an irrationality measure function
\begin{equation}
\psi^{[2]}_{\alpha}(t):=\min\limits_{1\le q\le t, q\in\mathbb{Z}, q\notin \mathcal{Q}_{\alpha}}\|q\alpha\|.
\end{equation}
related to so-called second best approximations. Obviously, $\psi^{[2]}_{\alpha}(t)$ is also a non-increasing peacewise constant function. It is connected with the properties of best approximations by fractions, whose denominators are not the denominators of the convergents to $\alpha$. Also, in \cite{Moshchevitin} an analogue of Lagrange constant was defined
\begin{equation}
\label{lagconstdef2}
\lambda^{[2]}(\alpha):=\limsup_{t\to \infty} (t\cdot \psi^{[2]}_\alpha (t))^{-1}
\end{equation}
and the corresponding spectrum 
\begin{equation}
\label{lagrdef}
\mathbb{L}_2:=\{\lambda ~|~\exists\alpha\in\mathbb{R\setminus Q} \colon \lambda = \lambda^{[2]}(\alpha)  \}
\end{equation}
was considered. The structure of $\mathbb{L}_2$ in some sense is similar to the structure of $\mathbb{L}$. The smallest elements of $\mathbb{L}_2$ form a countable discrete set and $\mathbb{L}_2$ contains a Hall's ray i.e. $\exists \mu: (\mu,+\infty)\in\mathbb{L}_2$. 

However, in the present paper we show that, unlike the classical Lagrange spectrum $\mathbb{L}$, whose discrete part is connected with the properties of so-called Markoff triples, the discrete part of $\mathbb{L}_2$ has a short and simple description. We prove that if a real number $x$ belongs to the discrete part of $\mathbb{L}_2$, then $x$ is equivalent to one of the following numbers: $[1;\overline{1}]$, $[1;\overline{1,1,3}]$ or $[1;\overline{1,1,1,1,3,(1,1,3)_{2k-1}}]$, where $k\ge 1$. We write $(a_1,\ldots,a_n)_m$ if the pattern $a_1,\ldots,a_n$ is repeated $m$ times and $\overline{a_1,\ldots,a_n}$ if this pattern is repeated infinitely many times.

It turns out that the discrete part of $\mathbb{L}_2$ is similar to the discrete part of so-called Dirichlet spectrum $\mathbb{D}$, which is defined as follows. Denote
$$
d_n(\alpha):=[a_n;a_{n-1},\ldots,a_1][a_{n+1};a_{n+2},\ldots]
$$
and
$$
d(\alpha):=\limsup\limits_{n\to\infty} d_n(\alpha).
$$
Then $\mathbb{D}$ is defined as a set of values taken by $d(\alpha)$ as $\alpha$ runs through the set of all irrational numbers. Spectrum $\mathbb{D}$ was studied by several authors (see for example \cite{Divis}, \cite{Lesca}, \cite{Morimoto}). It was shown by Lesca \cite{Lesca} that if $d(\alpha)<2+\sqrt{5}$, which is the accumulation point of the discrete part of $\mathbb{D}$, then $\alpha$ is equivalent either to $[1;\overline{1}]$ or to $[1;\overline{(1)_{2k-1},2}]$ for some $k\ge 1$. Our proof is similar to his argument.

\section{Results by Moshchevitin and Semenyuk}
We start with the lemma from \cite{Moshchevitin} which is the main tool to calculate $\lambda^{[2]}(\alpha)$  of given irrational number $\alpha$. We follow the notation of this paper but, as we already mentioned, we consider the ''reverse'' definition of $\mathbb{L}$ and $\mathbb{L}_2$. That is why, $\varkappa^i_n(\alpha)$ in our notation equals $1/\varkappa^i_n(\alpha)$ in the notation of the paper \cite{Moshchevitin}. 
\begin{lem}
\label{kappalem}
Suppose that an irrational number $\alpha$ is not equivalent to $\frac{1+\sqrt{5}}{2}=[1;\overline{1}]$. Consider three quantities:
\begin{equation}
\label{kappadef}
\varkappa^1_n(\alpha) = \frac{\alpha_n + \alpha^{*}_{n-1}}{(1 + \alpha^{*}_{n-1})(\alpha_n - 1)},\,\,\,\,\,\,
\varkappa^2_n(\alpha) = \frac{\alpha_{n+1} + \alpha^{*}_n}{(1 - \alpha^{*}_n)(\alpha_{n+1} + 1)}
,
\,\,\,\,\,\,
\varkappa^4_n(\alpha) = \frac{\alpha_n + \alpha^{*}_{n-1}}{4}.
\end{equation}
Then
\begin{equation}
\label{lambda2formula}
\lambda^{[2]}(\alpha)=\limsup\limits_{n\to\infty\colon a_n\ge 2}\max(\varkappa^1_n(\alpha), \varkappa^2_n(\alpha), \varkappa^4_n(\alpha)).
\end{equation}
\end{lem}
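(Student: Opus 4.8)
The plan is to convert the piecewise‑constant function $\psi^{[2]}_\alpha$ into a $\limsup$ over non‑convergent denominators, and then to identify the handful of denominator ``shapes'' that can carry this $\limsup$. First I would record the elementary reduction
\[
\lambda^{[2]}(\alpha)=\limsup_{\substack{q\to\infty\\ q\notin\mathcal Q_\alpha}}\frac{1}{q\,\|q\alpha\|}.
\]
Indeed $\psi^{[2]}_\alpha$ is non‑increasing and piecewise constant, dropping exactly at the records, i.e. the non‑convergent $q$ with $\|q\alpha\|<\|q'\alpha\|$ for all non‑convergent $q'<q$; on the interval starting at such a $q$ the function $(t\psi^{[2]}_\alpha(t))^{-1}$ attains a local maximum $(q\|q\alpha\|)^{-1}$ at $t=q$ and then decreases, so $\limsup_t(t\psi^{[2]}_\alpha(t))^{-1}$ is the $\limsup$ of $(q\|q\alpha\|)^{-1}$ over records. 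Since any non‑record $q$ is dominated by the record active at $q$ (of smaller denominator and smaller $\|\cdot\,\alpha\|$), one may take the $\limsup$ over all non‑convergent $q$.

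Next I would evaluate $(q\|q\alpha\|)^{-1}$ on the three natural families, using Perron's formula together with the identities $q_{n-1}=q_n\alpha^*_n$, $q_{n-2}=q_{n-1}\alpha^*_{n-1}$ and $\|q_{n-2}\alpha\|=\alpha_n\|q_{n-1}\alpha\|$. The semiconvergents strictly between $q_{n-1}$ and $q_n$ are $q(k)=kq_{n-1}+q_{n-2}$ with $\|q(k)\alpha\|=(\alpha_n-k)\|q_{n-1}\alpha\|$, $1\le k\le a_n-1$; a direct substitution yields $(q(1)\|q(1)\alpha\|)^{-1}=\varkappa^1_n$ for the first and $(q(a_n-1)\|q(a_n-1)\alpha\|)^{-1}=\varkappa^2_n$ for the last. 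Likewise $\|2q_{n-1}\alpha\|=2\|q_{n-1}\alpha\|$ gives $(2q_{n-1}\|2q_{n-1}\alpha\|)^{-1}=\tfrac14(q_{n-1}\|q_{n-1}\alpha\|)^{-1}=\varkappa^4_n$. All three are attached to an index $n$ whose semiconvergent batch is non‑empty, that is $a_n\ge 2$, which is precisely the restriction in \eqref{lambda2formula}.

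The inequality $\ge$ in \eqref{lambda2formula} is then immediate, since each $\varkappa^i_n$ is $(q\|q\alpha\|)^{-1}$ for an honest non‑convergent $q$. For the reverse inequality I would first cut the search down by two monotonicity facts: on a fixed batch the product $q(k)\|q(k)\alpha\|=(kq_{n-1}+q_{n-2})(\alpha_n-k)\|q_{n-1}\alpha\|$ is a downward parabola in $k$, hence concave, so its reciprocal is convex and maximized at the endpoints $k\in\{1,a_n-1\}$, i.e. at $\varkappa^1_n,\varkappa^2_n$; and among the multiples $jq_{n-1}$ ($j\ge 2$) one has $(jq_{n-1}\|jq_{n-1}\alpha\|)^{-1}=j^{-2}(\alpha_n+\alpha^*_{n-1})$, which decreases in $j$, so $j=2$ dominates and gives $\varkappa^4_n$. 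Thus within each of the two families only the named denominators matter.

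The main obstacle is the \emph{exhaustiveness}: one must show that a non‑convergent $q$ which is neither a semiconvergent nor a multiple of a single convergent denominator cannot contribute to the $\limsup$. Here I would use the reduction theory of best approximations: through the Ostrowski expansion of $q$ in the scale $(q_i)$, the quantity $\|q\alpha\|$ is a signed sum $\big|\sum_i c_{i+1}(-1)^i\|q_i\alpha\|\big|$, and any admixture of same‑parity (same‑sign) terms, such as in $q_m+q_{m-2}$, makes $\|q\alpha\|$ add rather than telescope, pushing $(q\|q\alpha\|)^{-1}$ below the competition; only single blocks survive, and these are exactly the semiconvergents and doubled convergents already treated. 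The delicate case is a long run of partial quotients equal to $1$, where the semiconvergent batches are empty and only doubled convergents remain; one checks these produce values bounded strictly below the $\limsup$, which is the very degeneracy that forces the hypothesis $\alpha\not\sim[1;\overline1]$. Combining the concavity reduction, the $j$‑monotonicity, and this exhaustiveness yields $\le$ and completes the identity.
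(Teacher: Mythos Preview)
The paper does not give its own proof of this lemma: it is quoted verbatim from Moshchevitin's paper \cite{Moshchevitin} and used as a black box, so there is no argument in the paper to compare yours against. What I can do is assess your sketch on its own terms.

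Your reduction of $\lambda^{[2]}(\alpha)$ to $\limsup_{q\notin\mathcal Q_\alpha}(q\|q\alpha\|)^{-1}$ is fine, and your three identifications are correct: with $q(k)=kq_{n-1}+q_{n-2}$ one indeed gets
\[
\bigl(q(k)\,\|q(k)\alpha\|\bigr)^{-1}=\frac{\alpha_n+\alpha^*_{n-1}}{(k+\alpha^*_{n-1})(\alpha_n-k)},
\]
which specialises to $\varkappa^1_n$ at $k=1$ and to $\varkappa^2_n$ at $k=a_n-1$, while $2q_{n-1}$ yields $\varkappa^4_n$. The concavity argument for the semiconvergent batch and the $j^{-2}$ monotonicity for multiples are also right, so the inequality $\lambda^{[2]}(\alpha)\ge\limsup\max(\varkappa^1_n,\varkappa^2_n,\varkappa^4_n)$ and the reduction \emph{within} these two families are solid.

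The gap is entirely in the paragraph you flag as ``the main obstacle''. Two points. First, your Ostrowski heuristic is stated inaccurately: semiconvergents $kq_{n-1}+q_{n-2}$ are \emph{not} single Ostrowski blocks (they carry two consecutive nonzero digits), so the slogan ``only single blocks survive, and these are exactly the semiconvergents and doubled convergents'' is not literally true and cannot serve as the organising principle. What one actually needs is the classification of the \emph{records} of $\psi^{[2]}_\alpha$ (the ``second best approximations''); in Moshchevitin's paper this is done explicitly, and it is where the missing $\varkappa^3_n$ lives --- the numbering $1,2,4$ is a hint that a fourth candidate denominator was considered and shown to be always dominated by one of the three you list. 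Your sketch does not reproduce that step. Second, the sentence about long runs of $1$'s is doing real work: when $a_n=1$ there is no semiconvergent at level $n$, yet $2q_{n-1}$ is still a non-convergent denominator sitting in $(q_n,q_{n+1})$, and you must argue that its contribution $(\alpha_n+\alpha^*_{n-1})/4<3/4$ is asymptotically irrelevant compared with the $\limsup$ taken only over indices with $a_m\ge2$. This is true (it is exactly why the hypothesis $\alpha\not\sim[1;\overline{1}]$ is needed), but it requires a short explicit bound rather than the parenthetical you give. In short: your outline matches the intended argument, the easy direction is complete, but the exhaustiveness step is at present an idea rather than a proof.
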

In the paper \cite{Moshchevitin} the two smallest elements of $\mathbb{L}_2$ were calculated.
\\
{\bf Theorem A.}
\textit{
\begin{enumerate}
\item{The smallest element of $\mathbb{L}_2$ is $\lambda_1:=\frac{\sqrt{5}}{4}\approx 0.559016$. Moreover, if $\lambda^{[2]}(\alpha)=\lambda_1$, then $\alpha\sim\frac{1+\sqrt{5}}{2}=[1;\overline{1}]$.}
\item{The second smallest element of $\mathbb{L}_2$ is $\lambda_2:=\frac{\sqrt{17}}{4}\approx 1.030776$. Moreover, if $\lambda^{[2]}(\alpha)=\lambda_2$, then $\alpha\sim\frac{1+\sqrt{17}}{2}=[2;\overline{1,1,3}]$.}
\end{enumerate}
}
In \cite{Semenyuk} P. Semenyuk calculated the third smallest element of $\mathbb{L}_2$.
\\
{\bf Theorem B.}
\textit{
The third smallest element of $\mathbb{L}_2$ is $\lambda_3:=\frac{13\sqrt{173}}{164}\approx 1.042611$. Moreover, if $\lambda^{[2]}(\alpha)=\lambda_3$, then $\alpha\sim\frac{39+13\sqrt{17}}{82}=[2;\overline{1,1,1,1,3,1,1,3}]$.
}

\section{Main result}
For each $n\ge 3$ define 
$\xi_n=[0;\overline{1,1,1,1,3,(1,1,3)_{2n-5}}]$ and 
\begin{equation}
\label{lambdandef}
\lambda_n:=\frac{[3;\overline{(1,1,3)_{2n-5},1,1,1,1,3}]+[0;\overline{1,1,1,1,(3,1,1)_{2n-5}}]}{4}.
\end{equation}
One can easily see that $\lambda_3$ defined in (\ref{lambdandef}) is the same constant as in Theorem B. Let us also denote by $\lambda_{\infty}$ the limit
$$
\lambda_{\infty}:=\lim\limits_{n\to\infty}\lambda_n=\frac{[3;\overline{1,1,3}]+[0;1,1,1,1,\overline{3,1,1}]}{4}=\frac{3\sqrt{17}+21}{32}\approx  1.042791.
$$
Now we are ready to formulate our main result.
\begin{theorem}
\label{thm1}
\ \ 
\begin{enumerate}
\item{
The spectrum $\mathbb{L}_2$ below $\lambda_{\infty}$ forms a discrete set
$$
(-\infty,\lambda_{\infty})\cap\mathbb{L}_2=\{\lambda_1<\lambda_2<\ldots<\lambda_n<\ldots\},
$$
where $\lambda_1$ and $\lambda_2$ are defined in Theorem A, $\lambda_n, n\ge3$ is defined in (\ref{lambdandef}). Moreover, $\lambda_{\infty}\in\mathbb{L}_2$.
}
\item{For all $n\ge 3$ if an irrational $\alpha$ is such that $\lambda^{[2]}(\alpha)=\lambda_n$, then $\alpha\sim\xi_n$.}
\end{enumerate}
\end{theorem}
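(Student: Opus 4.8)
The plan is to follow the strategy of Lesca for the Dirichlet spectrum and to turn the computation of $\lambda^{[2]}(\alpha)$ via Lemma~\ref{kappalem} into a classification of the admissible two-sided patterns of partial quotients. The three functions $\varkappa^1_n,\varkappa^2_n,\varkappa^4_n$ depend only on the tails $\alpha_n=[a_n;a_{n+1},\dots]$ and $\alpha^{*}_{n-1}=[0;a_{n-1},\dots,a_1]$, and both the value $\lambda^{[2]}(\alpha)$ and the equivalence class of $\alpha$ are unchanged if we discard any finite prefix of $(a_i)$. I would therefore fix $\alpha$ with $\lambda^{[2]}(\alpha)<\lambda_{\infty}$ and study only the tail of its partial-quotient sequence, keeping in mind that the $\limsup$ in \eqref{lambda2formula} runs over the indices $n$ with $a_n\ge 2$.

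First I would reduce the alphabet of partial quotients to $\{1,3\}$. Since $\varkappa^4_n=(\alpha_n+\alpha^{*}_{n-1})/4>a_n/4$ at every index with $a_n\ge 2$, and $\lambda_{\infty}<5/4$, a digit $a_n\ge 5$ is impossible in the tail; bounding $\alpha_n$ from below when a digit $a_n=4$ is followed by a digit $\le 3$ (and handling blocks of consecutive digits $\ge 4$ separately) shows that $a_n=4$ forces $\varkappa^4_n>\lambda_{\infty}$, so the tail eventually lies in $\{1,2,3\}$. To exclude the digit $2$ I would use $\varkappa^1_n$: since $\varkappa^1_n$ is decreasing in both $\alpha_n$ and $\alpha^{*}_{n-1}$, its minimum over all tails with $a_n=2$ and neighbouring digits in $\{1,2,3\}$ is attained by maximizing these two quantities and is still strictly larger than $\lambda_{\infty}$; hence $2$ occurs only finitely often. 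After deleting a finite prefix the tail is a word in $\{1,3\}$, and, as $a_n\ge 2$ now means $a_n=3$, the $\limsup$ in \eqref{lambda2formula} is taken over the positions of the $3$'s.

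Next I would analyse the arrangement of the $3$'s, writing the tail as $3\,1^{b_1}\,3\,1^{b_2}\,3\cdots$ with runs $1^{b_i}$ of $1$'s. The homogeneous choice $b_i\equiv 2$ is $\overline{1,1,3}$, for which $\alpha_n+\alpha^{*}_{n-1}=\sqrt{17}$ and $\lambda^{[2]}=\lambda_2$ (the inequalities $\varkappa^1_n,\varkappa^2_n\le\sqrt{17}/4$ there follow from Theorem~A). Direct evaluation of the relevant $\varkappa$ then rules out every run-length other than $2$ and $4$: runs of length $0$ or $1$ violate $\varkappa^1_n\le\lambda_{\infty}$ or $\varkappa^4_n\le\lambda_{\infty}$, runs of odd length $\ge 3$ and of length $\ge 5$ push $\varkappa^4_n$ above $\lambda_{\infty}$, and even the homogeneous pattern $\overline{1,1,1,1,3}$ of length-$4$ runs gives $\varkappa^4_n\approx 1.0548>\lambda_{\infty}$. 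Thus the admissible tails consist of length-$2$ runs together with isolated length-$4$ runs, the latter playing the role of \emph{defects}.

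Finally I would study the defects quantitatively. Evaluating $\varkappa^4_n$ at the $3$ that follows a length-$4$ run whose nearest defects lie $s$ length-$2$ runs away on each side reproduces, for $s=2n-5$, exactly the surd $\lambda_n$ of \eqref{lambdandef}, and letting $s\to\infty$ gives the single-defect value $\lambda_{\infty}$. The decisive point is a parity phenomenon: for odd $s=2m-1$ the defect value equals $\lambda_{m+2}<\lambda_{\infty}$ and increases to $\lambda_{\infty}$, whereas for even $s$ it is $\ge\lambda_{\infty}$, so every recurring spacing must be odd, which is where the index $2n-5$ comes from. Since the defect value is monotone in $s$ within each parity class, a bounded admissible sequence of odd spacings has $\limsup$ equal to $\lambda_n$, where $2n-5$ is the largest spacing occurring infinitely often, while unbounded spacings give $\lambda^{[2]}=\lambda_{\infty}$; taking spacings $1,3,5,\dots$ exhibits $\lambda_{\infty}\in\mathbb{L}_2$, and together these statements yield the discreteness and the complete list of Part~1. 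For Part~2, the equality $\lambda^{[2]}(\alpha)=\lambda_n$ together with the bound $\varkappa^4_n\le\lambda_n$ forces the extremal tails $\alpha_n,\alpha^{*}_{n-1}$ at the critical defects to converge to the specific surds occurring in $\lambda_n$, whence by uniqueness of continued-fraction expansions the tail of $\alpha$ must coincide with the period of $\xi_n$ and $\alpha\sim\xi_n$. The main obstacle is precisely this last stage: proving the parity dichotomy and the monotonicity of the defect value uniformly in $s$, checking that $\varkappa^4_n$ (not $\varkappa^1_n$ or $\varkappa^2_n$) governs the maximum at every $3$-position of an admissible pattern, and establishing the rigidity that excludes mixed-spacing tails approaching $\lambda_n$ without being eventually periodic; everything before it is a finite, if laborious, collection of estimates on continued fractions.
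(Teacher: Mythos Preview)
Your plan is essentially the paper's own strategy: prune the alphabet to $\{1,3\}$ via the $\varkappa^i_n$, force the run-lengths of $1$'s between consecutive $3$'s to lie in $\{2,4\}$ with the $4$'s isolated, and then classify the admissible spacings between the length-$4$ ``defects.'' A few remarks on the correspondence. First, the paper happens to use $\varkappa^2_n$ rather than $\varkappa^1_n$ to exclude the digit $2$; your choice also works. Second, your exclusion list (lengths $0,1$, odd $\ge 3$, and $\ge 5$, plus the homogeneous length-$4$ word) does not yet force the length-$4$ runs to be \emph{isolated}; the paper achieves this by adding the short pattern $1113111$ (a $3$ with at least three $1$'s on each side) to the prohibited list, and you will need the same. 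Third, the two obstacles you flag---the parity dichotomy and the rigidity excluding mixed spacings---are precisely the paper's two inductive lemmas: the pattern $111(311)_{2m}3111$ is prohibited for every $m\ge 0$, and the pattern $31111(311)_{2m+1}31111(311)_{2k+1}31111$ is prohibited whenever $k>m$; both are proved by comparing $\alpha_n+\alpha^*_{n-1}$ with $\alpha_\infty+\alpha^*_\infty$ through the standard difference formula for continued fractions, so your anticipated difficulty is real but routine once set up this way. Finally, for Part~2 you do not need a convergence argument: once the classification gives $\alpha\sim\xi_m$ for some $m$, the direct computation $\lambda^{[2]}(\xi_m)=\lambda_m$ (via the observation that $\varkappa^4_n$ dominates at the $3$ following a defect and that the middle $3$'s in $311\mathbf{3}113$ contribute at most $\lambda_2$) together with strict monotonicity of the sequence $(\lambda_m)$ yields $m=n$ immediately.
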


\section{Prohibited patterns}
The following lemma was proved in \cite{Moshchevitin} and \cite{Semenyuk}. For the sake of completeness, we give a sketch of proof of it.
\begin{lem}
\label{proh1}
Assume that the continued fraction expansion $[a_0;a_1,\ldots,a_n,\ldots]$ of an irrational number $\alpha$ contains infinitely many elements greater than $3$ or infinitely many patterns of the form $2$, $33$, $313$, $31113$, $311111$, $1113111$. Then $\lambda^{[2]}(\alpha)>\lambda_{\infty}$.
\end{lem}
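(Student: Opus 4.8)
The plan is to apply Lemma~\ref{kappalem}: it suffices to produce, under either hypothesis, a constant $c>\lambda_\infty$ and infinitely many indices $n$ with $a_n\ge2$ at which $\max(\varkappa^1_n,\varkappa^2_n,\varkappa^4_n)\ge c$, for then the $\limsup$ in \eqref{lambda2formula} is at least $c>\lambda_\infty$. (A number with infinitely many of the listed patterns is certainly not equivalent to $[1;\overline1]$, so Lemma~\ref{kappalem} applies.) First I would peel off the case of infinitely many $a_n\ge4$; if that fails, all but finitely many partial quotients are $\le3$, and I combine this uniform bound with the infinitely many occurrences of one pattern. The recurring principle is that a bounded partial quotient confines the relevant one-sided tails $\alpha_n=[a_n;a_{n+1},\dots]$ and $\alpha^*_{n-1}=[0;a_{n-1},\dots,a_1]$ (and $\alpha_{n+1},\alpha^*_n$) to intervals whose endpoints are the purely periodic fractions $[0;\overline{1,3}]$ and $[0;\overline{3,1}]$, and that each $\varkappa^i_n$ is monotone in every free tail, so each estimate reduces to finitely many extremal configurations.

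For the large-element case write $a=\alpha_n>4$ and $b=\alpha^*_{n-1}\in(0,1)$. Then $\varkappa^4_n=(a+b)/4$ increases in $b$ while $\varkappa^1_n=(a+b)/\bigl((1+b)(a-1)\bigr)$ decreases in $b$, so $\max(\varkappa^1_n,\varkappa^4_n)$ is least at their crossing $(1+b)(a-1)=4$; a short computation gives $\max(\varkappa^1_n,\varkappa^4_n)\ge 13/12>\lambda_\infty$ for all $a\ge4$ and $b\in(0,1)$, which settles this case.

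For the patterns, every listed block contains an entry $2$ or $3$, whose index $n$ satisfies $a_n\ge2$, and the fixed entries pin down several leading terms of the one-sided tails. The isolated patterns are handled by a single quantity: for $2$ one finds $\varkappa^1_n=\varkappa^2_n=(2+s+v)/\bigl((1+s)(1+v)\bigr)$, where $s=[0;a_{n+1},\dots]$ and $v=\alpha^*_{n-1}$ are both bounded by $[0;\overline{1,3}]$, and this already exceeds $\lambda_\infty$; for $33$ the quantity $\varkappa^2_n$ alone stays above $\lambda_\infty$ on the whole tail region; and for the central $3$ of $1113111$ both $\alpha_n$ and $\alpha^*_{n-1}$ are pinned near $3.62$ and $0.62$ by the surrounding $1$'s, so $\varkappa^4_n=(\alpha_n+\alpha^*_{n-1})/4>\lambda_\infty$ directly. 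The patterns $313$, $31113$, $311111$ require playing the three quantities against one another according to the free partial quotient adjacent to the block. Taking $31113$ as representative, at the index $n$ of the first $3$ the tail $\alpha_n$ is pinned near $3.64$ while $v=\alpha^*_{n-1}$ is free: if $a_{n-1}\in\{2,3\}$ then $v\le0.44$ and $\varkappa^1_n$ is large, whereas if $a_{n-1}=1$ then $v\ge0.56$ and $\varkappa^4_n$ is large. The crucial point is that, because partial quotients are integers $\le3$, $v$ cannot land in the intermediate range (roughly $(0.44,0.56)$) in which all three quantities would be $\le\lambda_\infty$; this ``integrality gap'' in the achievable continued-fraction values is what makes the estimate succeed. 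For $313$ the analogous dichotomy is on $a_{n+2}$, and where $\varkappa^2_n$ is needed one uses the identity $\alpha_{n+1}=1/(\alpha_n-a_n)$, so that a small $\alpha_n-a_n$ (which would weaken $\varkappa^4_n$) forces $\alpha_{n+1}$ large and hence $\varkappa^2_n$ large.

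The hard part will be the delicacy of the last group, above all $311111$. Since the extremal number realising $\lambda_\infty$ has its $3$'s separated by blocks of exactly two or four $1$'s, a block of five $1$'s after a $3$ lies only marginally beyond the threshold, and the resulting lower bound for $\varkappa^4_n$ (in the subcase $a_{n-1}=1$) exceeds $\lambda_\infty$ by only about $10^{-3}$. Verifying this positive margin rigorously, while controlling the free tails through the endpoints $[0;\overline{1,3}]$, $[0;\overline{3,1}]$ and invoking the integrality gap above, is the technical heart of the argument; the longer patterns merely require tracking more pinned terms and introduce no genuinely new difficulty.
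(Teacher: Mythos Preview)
Your outline is sound and would go through, but it diverges from the paper's argument in one structural way that makes your version noticeably harder than necessary.

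The paper treats the patterns \emph{sequentially}, not independently. Once the rows for $a_n\ge 5$, $a_n=4$, $2$, $33$, $313$ have been established, those patterns may be assumed absent when analysing $31113$; in particular the partial quotients adjacent to the block are forced to be $1$'s, so $\alpha^*_{n-1}=[0;1,1,\ldots]$ is already pinned and a single evaluation of $\varkappa^4_n$ at the resulting extremal tails suffices. The same cascading device handles $311111$ and $1113111$. Consequently each pattern is disposed of by \emph{one} $\varkappa^i_n$ and one numerical lower bound, with no case split on a free neighbour and no ``integrality gap'' argument; your balancing of $\varkappa^1_n$ against $\varkappa^4_n$ for $a_n\ge 4$ is likewise replaced by the trivial $\varkappa^4_n$ bound after splitting $a_n\ge 5$ from $a_n=4$. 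What you gain with your approach is self-containedness of each pattern; what the paper gains is that the delicate $311111$ case, which you correctly flag as having margin only $\sim 10^{-3}$, becomes a routine computation because both tails are already constrained by the earlier exclusions.

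Two small slips: for $313$ the free neighbour on which you split cannot be $a_{n+2}$, since that index lies inside the block; with $n$ the first $3$ the dichotomy is on $a_{n-1}$, and then only $\varkappa^1_n$ and $\varkappa^4_n$ are needed (your remark about $\varkappa^2_n$ is not actually used). Also, your identity $\varkappa^1_n=\varkappa^2_n$ at $a_n=2$ is correct and pleasant, though the paper simply bounds $\varkappa^2_n$ directly.
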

\begin{proof}
See the table below. The bold script indicates the element $a_n$. To give an example, let us prove the estimate on the third line. Suppose that $a_n=2$ for infinitely many $n$. It is clear that 
$$
\lambda^{[2]}(\alpha)=\limsup\limits_{n\to\infty\colon a_n\ge 2}\max(\varkappa^1_n(\alpha), \varkappa^2_n(\alpha), \varkappa^4_n(\alpha))\ge \limsup\limits_{n\to\infty\colon a_n\ge 2} \varkappa^2_n(\alpha).
$$
Note that the function $\varkappa^1_n(\alpha_n,\alpha^*_{n-1})$ is decreasing on both arguments while the functions $\varkappa^2_n(\alpha_{n+1}, \alpha^{*}_n)$ and $\varkappa^4_n(\alpha_n,\alpha^*_{n-1})$ are increasing on both arguments. Therefore, in order to obtain the lower estimate of $\varkappa^2_n$, we need to substitute the lower estimates of $\alpha_{n+1}$ and $\alpha^{*}_n$. As we see from the lines 1 and 2 of the table below, without loss of generality one can say that $a_i\le 3$ for all $i\ge 1$. Thus, $\alpha_{n+1}\ge[1;\overline{3,1}]$ and $\alpha^*_n\ge[0;2,\overline{1,3}]$. Substituting these estimates to (\ref{kappadef}) yields  $\varkappa^2_n(\alpha)\ge 1.123722>\lambda_{\infty}$.
\\

\renewcommand{\arraystretch}{1.2}
\begin{tabular}{ | c | c | c | c | }
%\begin{tabular}{|@{}c@{}|@{}c@{}|@{}c@{}|@{}c@{}|}
\hline
Pattern & $\varkappa^i_n$ used & Lower estimate & Numerical value\\ \hline
$a_n\ge 5$  & $\varkappa^4_n$ & \pbox{20cm}{([5]+[0])/4} & $1.25$ \\ \hline
$\boldsymbol{4}$ & $\varkappa^4_n$ & $[4;\overline{4,1}]+[0;\overline{4,1}]$ & $1.103553$ \\ \hline
$\boldsymbol{2}$  & $\varkappa^2_n$ & \centered{\pbox{20cm}{$\alpha_{n+1}\ge[1;\overline{3,1}];\ \ \alpha^*_n\ge[0;2,\overline{1,3}]$}} & 1.116515 \\  [2ex] \hline
$3\boldsymbol{3}$& $\varkappa^1_n$ & \centered{\pbox{20cm}{$\alpha_{n}\le[3;\overline{1,3}]; \ \ \alpha^*_{n-1}\le[0;3,\overline{3,1}]$}} & 1.123722 \\  [2ex] \hline
$\boldsymbol{3}13$  & $\varkappa^4_n$ & $([3;1,3,\overline{3,1}]+[0;1,\overline{1,3}])/4$  & 1.080930 \\ \hline
$\boldsymbol{3}1113$  & $\varkappa^4_n$ &  \centered{\pbox{20cm}{$([3;1,1,1,3,1,1,\overline{3,1,1,1}]+ [0;1,1,\overline{3,1,1,1}])/4 $}} & 1.050188 \\
\hline
$\boldsymbol{3}11111$  & $\varkappa^4_n$ &  \centered{\pbox{20cm}{$([3;1,1,1,1,1,1,\overline{3,1,1,1}]+ [0;1,1,\overline{3,1,1,1}])/4 $}} & 1.044287 \\
\hline
$111\boldsymbol{3}111$  & $\varkappa^4_n$ &  \centered{\pbox{20cm}{$([3;1,1,1,1,\overline{3,1,1,1}]+ [0;1,1,1,1,\overline{3,1,1,1}])/4 $}} & 1.054716 \\
\hline
\end{tabular}
\\
\\
The lemma is proven.
\end{proof}
We will call the pattern $(b_1,\ldots,b_k)$ \textit{prohibited} if the fact that this pattern occurs in the sequence $(a_1,a_2,\ldots)$ infinitely many times implies that  $\lambda^{[2]}(\alpha)>\lambda_{\infty}$. In other words, Lemma \ref{proh1} states that all the patterns from the first column of the table above are prohibited. The following corollary is an immediate consequence of Lemma \ref{proh1}. 
\begin{foll}
Suppose that $\lambda_2<\lambda^{[2]}(\alpha)\le\lambda_{\infty}$ for some irrational $\alpha=[a_0;a_1,\ldots,a_n,\ldots]$. Then there exists $N$ such that the sequence $(a_N,a_{N+1},\ldots)$ has the form $(31111(311)_{n_1}31111(311)_{n_2}31111\ldots)$, where all $n_i\ge 1$.
\end{foll}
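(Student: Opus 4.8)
The plan is to deduce the corollary directly from Lemma~\ref{proh1} and Theorem~A. Since $\lambda^{[2]}(\alpha)\le\lambda_\infty$, Lemma~\ref{proh1} tells us that each prohibited object---a partial quotient $\ge 4$, and the blocks $2$, $33$, $313$, $31113$, $311111$, $1113111$---occurs only finitely many times in $(a_1,a_2,\ldots)$. Fixing an index $N_0$ beyond the last occurrence of all of them, I have $a_n\in\{1,3\}$ for every $n\ge N_0$, and none of the listed blocks appears in the tail $(a_{N_0},a_{N_0+1},\ldots)$. Because $\lambda^{[2]}(\alpha)>\lambda_2>\lambda_1$, the number $\alpha$ is equivalent neither to $[1;\overline{1}]$ (which would force $\lambda^{[2]}(\alpha)=\lambda_1$ by Theorem~A) nor, as I will use below, to $\frac{1+\sqrt{17}}{2}$; in particular the tail cannot be eventually constant equal to $1$, so there are infinitely many $3$'s. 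From $N_0$ on I may therefore speak of the \emph{gaps} $g_1,g_2,\ldots$, namely the lengths of the maximal runs of $1$'s between consecutive $3$'s.

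The heart of the argument is to convert each remaining prohibition into a restriction on the $g_i$. The absence of $33$ gives $g_i\ge 1$, the absence of $313$ improves this to $g_i\ge 2$, the absence of $31113$ excludes $g_i=3$, and the absence of $311111$ (a $3$ followed by five $1$'s) forces $g_i\le 4$; hence $g_i\in\{2,4\}$. The last block, $1113111$, occurs exactly when some $3$ is flanked by at least three $1$'s on each side, which for gaps in $\{2,4\}$ happens precisely when two consecutive gaps equal $4$; so forbidding $1113111$ is the same as forbidding the block $44$ in the word $g_1g_2\cdots$. I expect this translation to be the only delicate point: the upper bound $g_i\le 4$ produced by $311111$ and the ``no $44$'' produced by $1113111$ each require reading the pattern against a generic occurrence of a gap, and one must check that a single gap of length $4$ introduces none of the forbidden blocks while two adjacent ones do.

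Finally I would assemble the word. A sequence over $\{2,4\}$ with no two adjacent $4$'s and infinitely many $4$'s is precisely one of the form $4\,(2)_{n_1}\,4\,(2)_{n_2}\,4\cdots$ with every $n_i\ge 1$. Infinitely many $4$'s is forced by the hypothesis: otherwise the gaps are eventually all equal to $2$, the tail equals $\overline{3,1,1}$, so $\alpha\sim\frac{1+\sqrt{17}}{2}=[2;\overline{1,1,3}]$ and Theorem~A gives $\lambda^{[2]}(\alpha)=\lambda_2$, contradicting $\lambda^{[2]}(\alpha)>\lambda_2$. Now choose $N$ to be the position of a $3$ that begins a gap of length $4$ (one exists beyond $N_0$ since there are infinitely many such gaps). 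Reading ``a $3$ followed by a gap of length $4$'' as the block $31111$ and a maximal run of $n_i$ gaps equal to $2$ as $(311)_{n_i}$, the tail becomes $(a_N,a_{N+1},\ldots)=31111\,(311)_{n_1}\,31111\,(311)_{n_2}\cdots$ with all $n_i\ge 1$, which is exactly the claimed form.
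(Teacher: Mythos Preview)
Your argument is correct and follows exactly the route the paper intends: the paper states the corollary as ``an immediate consequence of Lemma~\ref{proh1}'' without further detail, and what you have written is precisely the unpacking of that deduction (together with the appeal to Theorem~A to exclude the tails $\overline{1}$ and $\overline{3,1,1}$, which the paper leaves implicit). Your translation of the forbidden blocks into gap constraints---$g_i\in\{2,4\}$ from $33$, $313$, $31113$, $311111$, and ``no two consecutive $4$'s'' from $1113111$---is accurate, and the final assembly is clean.
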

Without loss of generality one can say that number $N$ from the previous corollary equals $1$. We will frequently refer to the following classical lemma concerning difference of two continued fractions.
\begin{lem}
\label{complem}
Let $\alpha=[a_0;a_1,\ldots,a_n,\alpha_{n+1}]$ and $\beta=[a_0;a_1,\ldots,a_n,\beta_{n+1}]$ be two continued fractions. Then
\begin{equation}
\label{cfracdiff}
\beta-\alpha=(-1)^{n+1}\frac{\beta_{n+1}-\alpha_{n+1}}{q_n^2(\alpha_{n+1}+\alpha^*_n)(\beta_{n+1}+\alpha^*_n)}, 
\end{equation}
where $q_n$ is the denominator of the convergent $\frac{p_n}{q_n}=[a_0;a_1,\ldots,a_n]$.
\end{lem}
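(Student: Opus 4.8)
The plan is to reduce the claim to two standard facts about convergents and then perform a direct subtraction. The first ingredient is the tail representation: writing $\frac{p_n}{q_n}=[a_0;a_1,\ldots,a_n]$ and $\frac{p_{n-1}}{q_{n-1}}=[a_0;a_1,\ldots,a_{n-1}]$ for the last two convergents of the common initial block, one has
$$
\alpha=\frac{\alpha_{n+1}p_n+p_{n-1}}{\alpha_{n+1}q_n+q_{n-1}},\qquad \beta=\frac{\beta_{n+1}p_n+p_{n-1}}{\beta_{n+1}q_n+q_{n-1}},
$$
which follows by induction from the recurrences $p_k=a_kp_{k-1}+p_{k-2}$ and $q_k=a_kq_{k-1}+q_{k-2}$. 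The second ingredient is the determinant identity $p_nq_{n-1}-p_{n-1}q_n=(-1)^{n-1}$.

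Next I would subtract the two fractions over the common denominator $(\alpha_{n+1}q_n+q_{n-1})(\beta_{n+1}q_n+q_{n-1})$. In the resulting numerator the quadratic terms $\alpha_{n+1}\beta_{n+1}p_nq_n$ and the constant terms $p_{n-1}q_{n-1}$ cancel, and collecting the survivors factors cleanly as
$$
(\beta_{n+1}-\alpha_{n+1})(p_nq_{n-1}-p_{n-1}q_n)=(-1)^{n-1}(\beta_{n+1}-\alpha_{n+1})=(-1)^{n+1}(\beta_{n+1}-\alpha_{n+1}),
$$
using the determinant identity together with $(-1)^{n-1}=(-1)^{n+1}$. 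This already produces the sign and the factor $\beta_{n+1}-\alpha_{n+1}$ appearing in (\ref{cfracdiff}).

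It remains to massage the denominator, and here lies the one genuine observation: the reversed-fraction identity $\alpha^*_n=[0;a_n,a_{n-1},\ldots,a_1]=q_{n-1}/q_n$, obtained by reading $q_k=a_kq_{k-1}+q_{k-2}$ backwards so that $q_n/q_{n-1}=[a_n;a_{n-1},\ldots,a_1]$. Factoring $q_n$ out of each bracket then gives
$$
(\alpha_{n+1}q_n+q_{n-1})(\beta_{n+1}q_n+q_{n-1})=q_n^2\left(\alpha_{n+1}+\frac{q_{n-1}}{q_n}\right)\left(\beta_{n+1}+\frac{q_{n-1}}{q_n}\right)=q_n^2(\alpha_{n+1}+\alpha^*_n)(\beta_{n+1}+\alpha^*_n),
$$
and dividing the numerator by this yields (\ref{cfracdiff}). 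I do not expect a real obstacle: the computation is essentially mechanical, and the only place demanding care is the bookkeeping in the numerator expansion together with the recognition that $q_{n-1}/q_n$ is exactly $\alpha^*_n$, which is precisely what makes the denominator collapse into the stated product. I record the identity here because it is the workhorse behind the perturbation estimates used in the later sections.
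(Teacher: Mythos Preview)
Your argument is correct and is essentially the same computation as the paper's, just unpacked one layer further. The paper writes $\beta-\alpha=(\beta-p_n/q_n)-(\alpha-p_n/q_n)$ and invokes Perron's formula $\alpha-p_n/q_n=\frac{(-1)^n}{q_n^2(\alpha_{n+1}+\alpha^*_n)}$ for each term, whereas you subtract the tail representations directly and rebuild Perron's formula on the spot via the determinant identity and $q_{n-1}/q_n=\alpha^*_n$; the underlying algebra is identical.
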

\begin{proof}
Note that $\alpha^*_n=\beta^*_n$. Using Perron's formula one can easily see that
$$
\beta-\alpha=\biggl(\beta-\frac{p_n}{q_n}\biggr)-\biggl(\alpha-\frac{p_n}{q_n}\biggr)=\frac{(-1)^{n+1}}{q_n^2}\biggl(\frac{1}{\alpha_{n+1}+\alpha^*_n}-\frac{1}{\beta_{n+1}+\alpha^*_n}\biggr)=\frac{(-1)^{n+1}}{q_n^2}\frac{\beta_{n+1}-\alpha_{n+1}}{(\alpha_{n+1}+\alpha^*_n)(\beta_{n+1}+\alpha^*_n)}.
$$
\end{proof}
One can consider the value $q_t$ as the \textit{continuant} of the sequence $(a_1,\ldots,a_t)$. Let us give some definition. Suppose that $A$ is an arbitrary (possibly empty) finite sequence of positive integers. By $\langle A\rangle$ we denote its continuant. It is defined as follows: continuant of an empty sequence  $\langle\cdot \rangle$ equals $1$, $\langle a_1\rangle=a_1$, if $t\ge 2$ then one has
\begin{equation}
\label{contrule}
\langle a_1,a_2,\ldots,a_t\rangle=a_t\langle a_1,a_2,\ldots,a_{t-1}\rangle+\langle a_1,a_2,\ldots,a_{t-2}\rangle.
\end{equation}
One can see that
\begin{equation}
[a_0;a_1,\ldots,a_t]=\frac{\langle a_0, a_1, a_2,\ldots,a_t\rangle}{\langle a_1,a_2,\ldots,a_t\rangle}=\frac{p_t}{q_t}.
\end{equation}
Rule (\ref{contrule}) can be generalized as follows:
\begin{equation}
\begin{split}
\label{contrulegen}
\langle a_1,a_2,\ldots,a_t,a_{t+1},\ldots,a_s\rangle=\langle a_1,a_2,\ldots,a_{t}\rangle\langle a_{t+1},\ldots,a_{s}\rangle+
\langle a_1,a_2,\ldots,a_{t-1}\rangle\langle a_{t+2},\ldots,a_{s}\rangle=\\
=\langle a_1,a_2,\ldots,a_{t}\rangle\langle a_{t+1},a_{t+2},\ldots,a_{s}\rangle(1+[0;a_{t},a_{t-1},\ldots,a_1][0;a_{t+1},a_{t+2},\ldots,a_s]).
\end{split}
\end{equation}
Denote $\alpha_{\infty}:=[3;\overline{1,1,3}]$ and $\alpha^*_{\infty}:=[0;1,1,1,1,\overline{3,1,1}]$. Then $\lambda_{\infty}=\frac{\alpha_{\infty}+\alpha^*_{\infty}}{4}$.
\begin{lem}
\label{proh2m}
The pattern 
\begin{equation}
\label{pattern2m}
111(311)_{2m}3111
\end{equation}
is prohibited for all $m\ge 0$.
\end{lem}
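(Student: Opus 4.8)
The plan is to push everything through the single functional $\varkappa^{4}$, evaluated at the first partial quotient of each run, and to exploit a parity phenomenon. First, for $m=0$ the pattern (\ref{pattern2m}) is exactly $1113111$, which already occurs in the list of Lemma \ref{proh1}, so nothing is left to prove. For $m\ge 1$ I would argue by contradiction: assume $\lambda^{[2]}(\alpha)\le\lambda_\infty$, so that by the Corollary to Lemma \ref{proh1} the tail of $\alpha$ has the form $\ldots 31111(311)^{n_1}31111(311)^{n_2}\ldots$ with all $n_i\ge 1$. A short combinatorial check (three consecutive $1$'s can only sit inside a block $1111$, and the trailing $3111$ must be the start of the next $31111$) shows that an occurrence of (\ref{pattern2m}) is possible only inside $31111(311)^{2m}31111$, i.e. exactly when some run has even length $n_i=2m$; thus the hypothesis forces $n_i=2m$ for infinitely many $i$.

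For a run of length $L$ let $n$ be the index of its first partial quotient $3$ (immediately after the preceding $31111$). Then $a_n=3$, $\alpha_n=[3;1,1,(3,1,1)^{L-1},3,1,1,1,1,\ldots]$ and $\alpha^{*}_{n-1}=[0;1,1,1,1,3,\ldots]$, so by (\ref{kappadef}) one has $\varkappa^{4}_{n}(\alpha)=\tfrac14(\alpha_n+\alpha^{*}_{n-1})=\lambda_\infty+\tfrac14\bigl(\epsilon(L)+\delta(L')\bigr)$, where $\epsilon(L)=\alpha_n-\alpha_\infty$, $\delta(L')=\alpha^{*}_{n-1}-\alpha^{*}_\infty$ and $L'$ is the length of the preceding run. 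I would read off the signs from Lemma \ref{complem}: comparing $\alpha_n$ with $\alpha_\infty=[3;\overline{1,1,3}]$ one finds that the two expansions first disagree at the $(3L+3)$-th partial quotient after $a_n$ (the defect forced by the following $31111$), where $\alpha_n$ carries the smaller quotient $1$; by the usual monotonicity of continued fractions this gives $\operatorname{sign}\epsilon(L)=+$ iff $L$ is even. In the same way $\alpha^{*}_{n-1}$ and $\alpha^{*}_\infty=[0;1,1,1,1,\overline{3,1,1}]$ first disagree at index $3L'+8$, which yields $\operatorname{sign}\delta(L')=+$ iff $L'$ is odd.

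The magnitudes are governed by (\ref{cfracdiff}) together with (\ref{contrulegen}): each block $3,1,1$ multiplies the relevant continuant by a factor tending to $\Lambda=4+\sqrt{17}$ (the larger root of $x^{2}-8x-1=0$), so $|\epsilon(L)|\asymp\Lambda^{-2L}$ and $|\delta(L')|\asymp\Lambda^{-2L'}$, and more precisely $|\epsilon(L)|/|\delta(L')|\asymp\Lambda^{\,2(L'-L)+10/3}$. Hence the index $n$ is \emph{good}, i.e. $\varkappa^{4}_{n}(\alpha)>\lambda_\infty$, whenever $L'$ is odd (then $\epsilon,\delta>0$) or $L'$ is even with $L'\ge L$ (then the positive $\epsilon$ dominates the ratio, which exceeds $\Lambda^{10/3}>1$); it can fail only when $L'$ is even and $L'\le L-2$. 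The step I expect to be most delicate is exactly this magnitude comparison in the borderline range $L'\approx L$, where the ratio is a bounded power of $\Lambda$ and the implied constants coming from (\ref{cfracdiff}) must be pinned down to be certain that $\epsilon$ dominates at $L'=L$.

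Finally, to produce infinitely many good indices I would pass to the smallest even run length $2\ell$ that occurs infinitely often (it exists because $2m$ does). Even runs shorter than $2\ell$ occur only finitely often, so all but finitely many occurrences of the run $2\ell$ are preceded by a run that is either odd or even of length $\ge 2\ell$; by the sign-and-magnitude analysis above, in either case the first index of that run is good. Therefore $\varkappa^{4}_{n}(\alpha)>\lambda_\infty$ for infinitely many $n$ with $a_n=3$, and Lemma \ref{kappalem} gives $\lambda^{[2]}(\alpha)\ge\limsup_{n}\varkappa^{4}_{n}(\alpha)>\lambda_\infty$, contradicting the assumption. This shows that the pattern (\ref{pattern2m}) is prohibited for every $m\ge 1$, completing the proof.
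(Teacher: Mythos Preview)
Your plan coincides with the paper's proof: both place the index $n$ at the first ``3'' of an even run, use only $\varkappa^{4}_{n}$, and reduce the claim to the inequality $\alpha_{n}-\alpha_{\infty}>\alpha^{*}_{\infty}-\alpha^{*}_{n-1}$, controlled via Lemma~\ref{complem}. The one organisational difference is that the paper argues by induction on $m$ (the induction hypothesis forces the preceding run to be odd or of even length $\ge 2m$), whereas you achieve the same constraint by passing to the minimal even run length that recurs infinitely often; the two devices are equivalent here. The step you flag as delicate---the borderline comparison at $L'=L$---is precisely what the paper makes explicit: it proves $\alpha_{n}-\alpha_{\infty}>\frac{1}{16}\langle 1,1,(3,1,1)_{L}\rangle^{-2}$ and $\alpha^{*}_{\infty}-\alpha^{*}_{n-1}<\frac{1}{6}\langle 1,1,1,1,(3,1,1)_{L+1}\rangle^{-2}$, then checks the continuant ratio exceeds $\sqrt{8/3}$ using $\langle 1,1,1,1,(3,1,1)_{L+1}\rangle>41\langle(3,1,1)_{L}\rangle$ and $\langle 1,1,(3,1,1)_{L}\rangle<3\langle(3,1,1)_{L}\rangle$; your asymptotic $\asymp\Lambda^{2(L'-L)+c}$ is in the right spirit but would need exactly this kind of explicit constant to close the argument.
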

\begin{proof} 
We will prove this statement by induction on $m$. The base case $m=0$ is already considered in Lemma \ref{proh1}. Suppose that for all $m\le k$ the pattern (\ref{pattern2m}) is prohibited. Let us also assume that the pattern (\ref{pattern2m}) for $m=k+1$ occurs in the sequence $(a_1,\ldots,a_n,\ldots)$ infinitely many times but $\lambda^{[2]}(\alpha)\le\lambda_{\infty}$. Let $n$ be an index of the first ''3'' in the group $(311)_{2k+2}$. Due to Lemma \ref{kappalem},  to get a contradiction it is enough to show that $\varkappa^4_n(\alpha)>\lambda_{\infty}$. This is equivalent to the inequality 
\begin{equation}
\label{diffsumequiv}
\alpha_n-\alpha_{\infty}>\alpha^{*}_{\infty}-\alpha^*_{n-1}.
\end{equation}
Let us obtain the lower estimate of $\alpha_n-\alpha_{\infty}$ using Lemma \ref{complem}. We know that
\begin{equation}
\label{alphanest}
\alpha_n-\alpha_{\infty}=[\underbrace{(3;1,1)_{2k+2},3,1,1}_{\text{coinciding part}},1,1,\ldots]-[\underbrace{(3;1,1)_{2k+2},3,1,1}_{\text{coinciding part}},3,1,1,\ldots].
\end{equation}
The first different partial quotient has index $6k+9$, hence by Lemma \ref{complem} one has
\begin{equation}
\begin{split}
\label{alphanest2}
\alpha_n-\alpha_{\infty}=\frac{[3;1,1,\ldots]-[1;1,1,\ldots]}{\langle 1,1,(3,1,1)_{2k+2}\rangle^2([3;1,1,\ldots]+[1;1,3,\ldots])([1;1,1,\ldots]+[1;1,3,\ldots])}>\\
\frac{[3;1,1]-[1;1]}{\langle 1,1,(3,1,1)_{2k+2}\rangle^2([3;1]+[1;1])([1;1]+[1;1])}=\frac{1}{16\langle1,1,(3,1,1)_{2k+2}\rangle^2}.
\end{split}
\end{equation}
Now we obtain the lower estimate of $\alpha^*_{n-1}=[0;1,1,1,1,3,\ldots]$. As the patterns $33$, $313$, and $1113111$ are prohibited, it extends as
$$
\alpha^*_{n-1}=[0;1,1,1,1,(3,1,1)_2,\ldots].
$$
As the we need the lower estimate of $\alpha^*_{n-1}$, we write
$$
\alpha^*_{n-1}\ge[0;1,1,1,1,(3,1,1)_2,3,\ldots].
$$
Which extends as
$$
\alpha^*_{n-1}\ge[0;1,1,1,1,(3,1,1)_2,3,1,1,\ldots].
$$
By the induction hypothesis, the pattern $111(311)_{2}3111$ is prohibited. Hence
$$
\alpha^*_{n-1}\ge[0;1,1,1,1,(3,1,1)_3,3,1,1\ldots]=[0;1,1,1,1,(3,1,1)_4\ldots].
$$
Repeating the same argument $k$ times, we obtain that
$$
\alpha^*_{n-1}\ge[0;1,1,1,1,(3,1,1)_{2k+2},3,1,1,1,1,\ldots].
$$
Let us now estimate the difference $\alpha^{*}_{\infty}-\alpha^*_{n-1}$ from above.
$$
\alpha^{*}_{\infty}-\alpha^*_{n-1}=[\underbrace{0;1,1,1,1,(3,1,1)_{2k+2},3,1,1}_{\text{coinciding part}},3,1,1,\ldots]-[\underbrace{0;1,1,1,1,(3,1,1)_{2k+2},3,1,1}_{\text{coinciding part}},1,1,\ldots].
$$
The first different partial quotient has index $6k+14$, hence by Lemma \ref{complem} one has
\begin{equation}
\begin{split}
\label{alphanest2star}
\alpha^{*}_{\infty}-\alpha^*_{n-1}=\frac{[3;1,1,\ldots]-[1;1,1,\ldots]}{\langle1,1,1,1,(3,1,1)_{2k+3}\rangle^2([3;1,1,\ldots]+[1;1,3,\ldots])([1;1,1,\ldots]+[1;1,3,\ldots])}<\\
\frac{[3;1]-[1;1,1]}{\langle1,1,1,1,(3,1,1)_{2k+3}\rangle^2([3;1,1]+[1;1,1])([1;1,1]+[1;1,1])}=\frac{1}{6\langle1,1,1,1,(3,1,1)_{2k+3}\rangle^2}.
\end{split}
\end{equation}
Comparing (\ref{alphanest2}) and (\ref{alphanest2star}), one can see that in order to obtain (\ref{diffsumequiv}) it suffices to show that
$$
\frac{\langle1,1,1,1,(3,1,1)_{2k+3}\rangle}{\langle 1,1,(3,1,1)_{2k+2}\rangle}>\sqrt{\frac{8}{3}}.
$$ 
The last inequality easily follows from the trivial estimates
$$
\langle1,1,1,1,(3,1,1)_{2k+3}\rangle>\langle1,1,1,1,3,1,1\rangle\langle(3,1,1)_{2k+2}\rangle=41\langle(3,1,1)_{2k+2}\rangle
$$
and
$$
\langle 1,1,(3,1,1)_{2k+2}\rangle<3\langle(3,1,1)_{2k+2}\rangle.
$$
Lemma is proven.
\end{proof}
\begin{lem}
\label{proh2mp1}
The pattern
\begin{equation}
\label{pattern2mpn}
31111(311)_{2m+1}31111(311)_{2k+1}31111,
\end{equation}
where $k>m\ge 0$ is prohibited.
\end{lem}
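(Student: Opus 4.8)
The plan is to argue by contradiction, exactly in the spirit of Lemma \ref{proh2m}: assuming the pattern (\ref{pattern2mpn}) occurs infinitely often while $\lambda^{[2]}(\alpha)\le\lambda_\infty$, I will exhibit an index $n$ with $\varkappa^4_n(\alpha)>\lambda_\infty$, contradicting Lemma \ref{kappalem}. By Lemma \ref{proh1} I may assume throughout that $a_i\le 3$ for all $i$, so that every tail $\alpha_j$ and every reversed prefix $\alpha^*_j$ lies in a fixed bounded range; this is what makes the truncated estimates below legitimate.

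First I would take $n$ to be the index of the first ``$3$'' of the second, longer block $(311)_{2k+1}$, so that $a_n=3\ge 2$ and this index is seen by the $\limsup$ in Lemma \ref{kappalem}. Reading to the right of $n$ the sequence begins $3,1,1,(3,1,1)_{2k},3,1,1,1,1,\dots$, while $\alpha_\infty=[3;\overline{1,1,3}]$ runs periodically; the two first disagree at relative index $6k+6$ (the third ``$1$'' of the trailing $31111$). Reading to the left, $\alpha^*_{n-1}$ begins $[0;1,1,1,1,3,(1,1,3)_{2m+1},1,1,\dots]$ and first disagrees with $\alpha^*_\infty=[0;1,1,1,1,\overline{3,1,1}]$ at relative index $6m+11$. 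Since $\varkappa^4_n=\frac{\alpha_n+\alpha^*_{n-1}}{4}$ and $\lambda_\infty=\frac{\alpha_\infty+\alpha^*_\infty}{4}$, the target inequality $\varkappa^4_n>\lambda_\infty$ is equivalent to $\alpha_n-\alpha_\infty>\alpha^*_\infty-\alpha^*_{n-1}$. A parity check at the two disagreement indices shows that both sides are negative, so the claim reduces to
$$|\alpha_n-\alpha_\infty|<|\alpha^*_\infty-\alpha^*_{n-1}|.$$
Heuristically this must hold because the longer block $(311)_{2k+1}$ forces $\alpha_n$ to track $\alpha_\infty$ for longer than the shorter block $(311)_{2m+1}$ lets $\alpha^*_{n-1}$ track $\alpha^*_\infty$; the hypothesis $k>m$ is used precisely here.

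To make this rigorous I would apply Lemma \ref{complem} to each side. The coinciding parts are $1,1,(3,1,1)_{2k+1}$ on the left and $1,1,1,1,3,(1,1,3)_{2m+1},1,1$ on the right, so
$$|\alpha_n-\alpha_\infty|=\frac{\delta-\gamma}{\langle1,1,(3,1,1)_{2k+1}\rangle^2(\gamma+\rho)(\delta+\rho)},\qquad |\alpha^*_\infty-\alpha^*_{n-1}|=\frac{\delta'-\gamma'}{\langle1,1,1,1,3,(1,1,3)_{2m+1},1,1\rangle^2(\gamma'+\rho')(\delta'+\rho')},$$
where $\delta,\delta',\gamma,\gamma',\rho,\rho'$ are explicit quadratic irrationals of the shape $[3;\dots]$, $[1;1,\dots]$, $[0;1,1,\dots]$. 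The crucial point is that $\delta=\delta'=\alpha_\infty$ \emph{exactly}, while $\gamma,\gamma'$ and $\rho,\rho'$ share their first several partial quotients; truncating the tails to clean rational values (as in the proof of Lemma \ref{proh2m}) therefore yields $|\alpha_n-\alpha_\infty|<A_1\langle1,1,(3,1,1)_{2k+1}\rangle^{-2}$ and $|\alpha^*_\infty-\alpha^*_{n-1}|>A_2\langle1,1,1,1,3,(1,1,3)_{2m+1},1,1\rangle^{-2}$ with two constants $A_1,A_2$ of comparable size.

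It then suffices to prove the continuant inequality $\langle1,1,1,1,3,(1,1,3)_{2m+1},1,1\rangle<\sqrt{A_2/A_1}\,\langle1,1,(3,1,1)_{2k+1}\rangle$. Here I would use the reversal invariance $\langle(1,1,3)_{j}\rangle=\langle(3,1,1)_{j}\rangle$ together with (\ref{contrulegen}) and the monotonicity $\langle1,1,(3,1,1)_{2k+1}\rangle\ge\langle1,1,(3,1,1)_{2m+3}\rangle$ coming from $k\ge m+1$, to factor a common $\langle(3,1,1)_{2m+1}\rangle$ out of both sides; the two extra blocks on the left contribute a gain of roughly $\Lambda^2$ with $\Lambda=4+\sqrt{17}$, which swamps the bounded prefix/suffix factors on the right. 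The main obstacle is exactly this bookkeeping: the crude estimate $\langle A,B\rangle\le 2\langle A\rangle\langle B\rangle$ loses too much, so one must keep the exact two-term expansion (\ref{contrulegen}) in order to push the continuant ratio safely below $\sqrt{A_2/A_1}$, after which Lemma \ref{kappalem} gives the contradiction. Since enlarging $k$ only increases $\langle1,1,(3,1,1)_{2k+1}\rangle$, it is enough to settle the extremal case $k=m+1$.
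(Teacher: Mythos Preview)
Your approach is essentially identical to the paper's: same choice of $n$ (the first ``3'' of the longer block $(311)_{2k+1}$), same reduction of $\varkappa^4_n>\lambda_\infty$ to the inequality $\alpha_\infty-\alpha_n<\alpha^*_{n-1}-\alpha^*_\infty$, same use of Lemma~\ref{complem} at the disagreement indices $6k+6$ and $6m+11$, and the same reduction to the extremal case $k=m+1$. Your coinciding-part continuant $\langle 1,1,1,1,3,(1,1,3)_{2m+1},1,1\rangle$ is just a regrouping of the paper's $\langle 1,1,1,1,(3,1,1)_{2m+2}\rangle$.

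One remark: you overestimate the difficulty of the final step. With the same truncations as in Lemma~\ref{proh2m} one gets the explicit constants $A_1=\tfrac{1}{6}$ and $A_2=\tfrac{1}{16}$, so the needed continuant ratio is only $\sqrt{8/3}\approx 1.63$. The crude bounds $\langle 1,1,(3,1,1)_{2m+3}\rangle\ge \langle 1,1,(3,1,1)_{2m+2}\rangle\cdot\langle 3,1,1\rangle=7\,\langle 1,1,(3,1,1)_{2m+2}\rangle$ and $\langle 1,1,1,1,(3,1,1)_{2m+2}\rangle\le 3\,\langle 1,1,(3,1,1)_{2m+2}\rangle$ already give a ratio $\ge 7/3$, so there is no need to keep the full two-term expansion (\ref{contrulegen}) or to invoke $\Lambda=4+\sqrt{17}$.
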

\begin{proof}
We will use the approach of Lemma \ref{proh2m}. Our goal is to show that the inequality
\begin{equation}
\label{diffsumequiv2}
\alpha_{\infty}-\alpha_n<\alpha^*_{n-1}-\alpha^{*}_{\infty}.
\end{equation}
holds for infinitely many $n$. Let $n$ be an index of the first ''3'' in the group $(311)_{2k+1}$. Once again, we obtain the upper estimate of $\alpha_{\infty}-\alpha_n$ using Lemma \ref{complem}.
\begin{equation}
\label{alphanest_min}
\alpha_{\infty}-\alpha_n=[\underbrace{(3;1,1)_{2k+1},3,1,1}_{\text{coinciding part}},3,1,1,3,1,1,\ldots]-[\underbrace{(3;1,1)_{2k+1},3,1,1}_{\text{coinciding part}},1,1,3,1,1,\ldots].
\end{equation}
The first different partial quotient has index $6k+6$. Using the estimates from (\ref{alphanest2star}), one can see that
\begin{equation}
\label{alphanest_minfin}
\alpha_{\infty}-\alpha_n<\frac{1}{6\langle 1,1,(3,1,1)_{2k+1}\rangle^2}.
\end{equation}
Now we need a lower estimate of $\alpha^*_{n-1}-\alpha^{*}_{\infty}$.
$$
\alpha^*_{n-1}-\alpha^{*}_{\infty}=[\underbrace{0;1,1,1,1,(3,1,1)_{2m+1},3,1,1}_{\text{coinciding part}},1,1,\ldots]-[\underbrace{0;1,1,1,1,(3,1,1)_{2m+1},3,1,1}_{\text{coinciding part}},3,1,1,\ldots].
$$
The first different partial quotient has index $6m+11$. Using the estimates from (\ref{alphanest2}), one can see that
\begin{equation}
\label{alphanest_minfin2}
\alpha^*_{n-1}-\alpha^{*}_{\infty}>\frac{1}{16\langle 1,1,1,1,(3,1,1)_{2m+2}\rangle^2}.
\end{equation}
 Comparing (\ref{alphanest_minfin}) and (\ref{alphanest_minfin2}), we deduce that in order to obtain (\ref{diffsumequiv2}) it suffices to show that
$$
\frac{\langle 1,1,(3,1,1)_{2k+1}\rangle}{\langle1,1,1,1,(3,1,1)_{2m+2}\rangle}>\sqrt{\frac{8}{3}}.
$$
As $k\ge m+1$, it is enough to verify the inequality
\begin{equation}
\label{alphanest_minfin3}
\frac{\langle 1,1,(3,1,1)_{2m+3}\rangle}{\langle1,1,1,1,(3,1,1)_{2m+2}\rangle}>\sqrt{\frac{8}{3}}
\end{equation}
for all $m\ge 0$. As 
$$
\langle 1,1,(3,1,1)_{2m+3}\rangle\ge\langle 1,1,(3,1,1)_{2m+2}\rangle\langle 3,1,1\rangle=7\langle 1,1,(3,1,1)_{2m+2}\rangle
$$
and
$$
\langle1,1,1,1,(3,1,1)_{2m+2}\rangle\le3\langle 1,1,(3,1,1)_{2m+2}\rangle,
$$
the inequality (\ref{alphanest_minfin3}) holds and the lemma is proven.
\end{proof}
As an immediate consequence of Lemmas \ref{proh2m} and \ref{proh2mp1}, we deduce the following corollary.
\begin{foll}
Suppose that $\lambda_2<\lambda^{[2]}(\alpha)<\lambda_{\infty}$ for some irrational $\alpha=[a_0;a_1,\ldots,a_n,\ldots]$. Then $\alpha\sim\xi_n$ for some $n\ge 3$.
\end{foll}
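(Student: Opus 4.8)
The plan is to reduce everything to the combinatorics of a single integer sequence — the ``block lengths'' of the tail — and then read off eventual constancy directly from the two prohibition lemmas. By the Corollary following Lemma \ref{proh1}, after discarding finitely many partial quotients (so we may assume $N=1$) the sequence $(a_1,a_2,\ldots)$ has the shape
$$
31111\,(311)_{n_1}\,31111\,(311)_{n_2}\,31111\,\cdots,\qquad n_i\ge 1 .
$$
I would first record that infinitely many blocks $31111$ genuinely occur: if only finitely many did, the tail would be $\overline{311}$, whence $\alpha\sim[2;\overline{1,1,3}]$ and $\lambda^{[2]}(\alpha)=\lambda_2$ by Theorem A, contradicting $\lambda^{[2]}(\alpha)>\lambda_2$. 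Thus there is a well-defined infinite sequence of positive integers $(n_i)_{i\ge 1}$, where $n_i$ is the number of $311$-groups (equivalently, the number of length-$2$ gaps between consecutive $3$'s) sitting between the $i$-th and $(i+1)$-th occurrences of $31111$.

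Next I would translate the two prohibition lemmas into parity and monotonicity constraints on $(n_i)$. An even value $n_i=2m$ forces the substring $111\,(311)_{2m}\,3111$ to appear, since the trailing $111$ of the preceding $31111$, the group $(311)_{2m}$, and the leading $3111$ of the following $31111$ concatenate to exactly this word; hence if infinitely many $n_i$ were even, Lemma \ref{proh2m} would give $\lambda^{[2]}(\alpha)>\lambda_{\infty}$, a contradiction. So all but finitely many $n_i$ are odd. Likewise, a pair of consecutive odd blocks with $n_i=2m+1<2k+1=n_{i+1}$ produces the substring $31111\,(311)_{2m+1}\,31111\,(311)_{2k+1}\,31111$, prohibited by Lemma \ref{proh2mp1}; hence only finitely many strict increases $n_i<n_{i+1}$ can occur.

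Finally I would conclude. Beyond the last even entry and the last strict increase, $(n_i)$ is a non-increasing sequence of positive odd integers, so it is eventually equal to a constant odd value $c\ge 1$; writing $c=2n-5$ gives $n\ge 3$. From that index on the tail is purely periodic with period $31111\,(311)_{2n-5}$, and a cyclic rotation of this period coincides with the period $1,1,1,1,3,(1,1,3)_{2n-5}$ of $\xi_n$, so $\alpha\sim\xi_n$. I expect the only real work to be this bookkeeping — matching each forbidden word to the corresponding parity or monotonicity statement about $(n_i)$, and verifying the final cyclic rotation — together with the opening remark that guarantees infinitely many blocks. No estimate beyond the quoted lemmas should be required; the one point to state carefully is that Lemma \ref{proh2mp1} only forbids increases, but a non-increasing sequence of positive integers is automatically eventually constant, so prohibiting decreases separately is unnecessary.
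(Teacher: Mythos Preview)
Your proposal is correct and is exactly the natural unpacking of what the paper leaves implicit: the paper simply states that the corollary is ``an immediate consequence'' of Lemmas~\ref{proh2m} and~\ref{proh2mp1}, and your block-length argument (parity from Lemma~\ref{proh2m}, no increases from Lemma~\ref{proh2mp1}, hence eventual constancy) is precisely how one verifies that immediacy. The only remark is that the opening observation about infinitely many $31111$ blocks is already built into the hypothesis $\lambda^{[2]}(\alpha)>\lambda_2$ of Corollary~1, so you are re-deriving rather than adding to the paper's logic.
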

\section{Lower estimates}
In this section we show that $\lambda^{[2]}(\xi_i)=\lambda_i$ and thus complete the proof of Theorem \ref{thm1}. First, we prove two technical lemmas.
\begin{lem}
\label{3113113}
Let $\alpha=[a_0;a_1,\ldots]$ be an arbitrary irrational number. If $a_n$ is the middle ''3'' in the pattern $311\boldsymbol{3}113$, then
$$
\max(\varkappa^1_n(\alpha), \varkappa^2_n(\alpha), \varkappa^4_n(\alpha))<1.04.
$$ 
\end{lem}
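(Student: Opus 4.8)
The plan is to combine Lemma~\ref{kappalem} with the monotonicity of the three quantities noted in the proof of Lemma~\ref{proh1}: $\varkappa^1_n$ decreases in both $\alpha_n$ and $\alpha^*_{n-1}$, while $\varkappa^2_n$ and $\varkappa^4_n$ increase in their arguments. Since $a_n$ is the central $3$ of the block $311\boldsymbol{3}113$, the seven partial quotients $a_{n-3},\dots,a_{n+3}$ equal $3,1,1,3,1,1,3$, so that
\[
\alpha_n=[3;1,1,3,\dots],\quad \alpha_{n+1}=[1;1,3,\dots],\quad \alpha^*_{n-1}=[0;1,1,3,\dots],\quad \alpha^*_n=[0;3,1,1,3,\dots],
\]
where only the indicated initial block is prescribed and the remaining tail is arbitrary. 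The whole argument reduces to replacing each of these four quantities by the extremal value compatible with its prescribed block and then evaluating the three $\varkappa$'s at those extremes.

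Finding the correct extremes is the heart of the matter. For a continued fraction $[b_0;b_1,\dots,b_m,x]$ with free tail $x\ge1$, the value is increasing in $x$ when $m$ is odd and decreasing when $m$ is even --- precisely the sign $(-1)^{m+1}$ that governs Lemma~\ref{complem}. Matching these signs against the required monotonicity, I would push each tail to $\infty$ or to $1$ as appropriate. For the increasing quantity $\varkappa^4_n$ I use the upper bounds $\alpha_n<[3;1,1,3]=\frac{25}{7}$ and $\alpha^*_{n-1}\le[0;1,1,3]=\frac47$, giving $\varkappa^4_n<\frac{25/7+4/7}{4}=\frac{29}{28}$. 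For the decreasing quantity $\varkappa^1_n$ I use the lower bounds $\alpha_n>[3;1,1,3,1]=\frac{32}{9}$ and $\alpha^*_{n-1}>[0;1,1,4]=\frac59$, giving $\varkappa^1_n<\frac{333}{322}$. For the increasing quantity $\varkappa^2_n$ I use the upper bounds $\alpha_{n+1}<[1;1,4]=\frac95$ and $\alpha^*_n<[0;3,1,1,4]=\frac{9}{32}$, giving $\varkappa^2_n<\frac{333}{322}$. Taking the maximum yields $\max(\varkappa^1_n,\varkappa^2_n,\varkappa^4_n)<\frac{29}{28}=1.0357\ldots<1.04$, as required.

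The only delicate point --- and the place where a careless sign would break the estimate --- is deciding, for each of the six one-sided bounds above, whether the relevant tail should be sent to $1$ or to $\infty$; this is dictated entirely by the parity of the length of the prescribed block together with whether the quantity in question is increasing or decreasing. A minor additional check is the degenerate case $n=4$, in which $\alpha^*_{n-1}=[0;1,1,3]$ and $\alpha^*_n=[0;3,1,1,3]$ are finite continued fractions terminating at $a_1$; one verifies directly that their exact values $\frac47$ and $\frac{7}{25}$ still satisfy the inequalities used above, so the bounds persist for every admissible $n$.
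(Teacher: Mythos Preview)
Your argument is correct and follows the same strategy as the paper: exploit the monotonicity of $\varkappa^1_n,\varkappa^2_n,\varkappa^4_n$ in their arguments and evaluate at extremal values of $\alpha_n,\alpha_{n+1},\alpha^*_{n-1},\alpha^*_n$ compatible with the block $3,1,1,3,1,1,3$. The difference is only in the choice of extremes: the paper plugs in periodic tails such as $[3;1,1,\overline{3,1,1,1}]$ and $[0;1,1,\overline{3,1,1,1}]$ (tacitly using that in the intended application to the $\xi_i$ all partial quotients lie in $\{1,3\}$ with no forbidden patterns), whereas you push each tail to $1$ or to $\infty$ and obtain the rational extremes $25/7$, $32/9$, $4/7$, $5/9$, $9/5$, $9/32$. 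Your version has the advantage of matching the lemma's ``arbitrary $\alpha$'' hypothesis literally and of producing the clean bound $29/28$; the paper's version gives slightly sharper numbers ($\approx1.0314$) but only under the implicit structural restriction. One tiny cosmetic point: the inequalities $\alpha^*_{n-1}>5/9$ and $\alpha^*_n<9/32$ should be non-strict (e.g.\ $n=5$, $a_1=1$ gives equality), but since $\alpha_n>32/9$ and $\alpha_{n+1}<9/5$ are always strict for irrational $\alpha$, your final bounds $\varkappa^1_n,\varkappa^2_n<333/322$ and $\varkappa^4_n<29/28$ remain strict.
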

\begin{proof}
 As the function $\varkappa^1_n(\alpha_n,\alpha^*_{n-1})$ is decreasing on both arguments and 
$$
\alpha_n\ge [3;1,1,\overline{3,1,1,1}],\quad \ \alpha^*_{n-1}\ge [0;1,1,\overline{3,1,1,1}],
$$
we see that
\begin{equation}
\label{est1}
\varkappa^1_n(\alpha)\le \frac{[3;1,1,\overline{3,1,1,1}]+[0;1,1,\overline{3,1,1,1}]}{(1+ [0;1,1,\overline{3,1,1,1}])([3;1,1,\overline{3,1,1,1}]-1)}\approx 1.031440.
\end{equation}
The quantities $\varkappa^2_n(\alpha)$ and $\varkappa^4_n(\alpha)$ are estimated in a similar way. As
$$
\alpha_{n+1}\le[1;1,\overline{3,1,1,1}],\quad \alpha^*_{n}\le [0;3,1,1,\overline{3,1,1,1}],
$$
we get the following estimate
\begin{equation}
\label{est2}
\varkappa^2_n(\alpha)\le \frac{[1;1,\overline{3,1,1,1}]+[0;3,1,1,\overline{3,1,1,1}]}{(1-[1;1,\overline{3,1,1,1}])(1+[0;3,1,1,\overline{3,1,1,1}])}\approx 1.031440.
\end{equation}
Finally, as
$$
\alpha_n\le [3;1,1,3,1,1,\overline{3,1,1,1}],\quad \ \alpha^*_{n-1}\le [0;1,1,3,1,1,\overline{3,1,1,1}]
$$
we see that
\begin{equation}
\label{est4}
\varkappa^4_n(\alpha)\le \frac{[3;1,1,3,1,1,\overline{3,1,1,1}]+[0;1,1,3,1,1,\overline{3,1,1,1}]}{4}\approx 1.030785.
\end{equation}
Combining the estimates (\ref{est1}), (\ref{est2}), and (\ref{est4}) yields the statement of the lemma.
\end{proof}
\begin{lem}
\label{311113113}
Let $\alpha=[a_0;a_1,\ldots]$ be an arbitrary irrational number. If $a_n$ is the middle ''3'' in the pattern $31111\boldsymbol{3}113$ or $31111\boldsymbol{3}113$, then
$$
\varkappa^4_n(\alpha)>\max(\varkappa^1_n(\alpha), \varkappa^2_n(\alpha)).
$$ 
\end{lem}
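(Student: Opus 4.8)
The plan is to treat the two inequalities $\varkappa^4_n>\varkappa^1_n$ and $\varkappa^4_n>\varkappa^2_n$ separately, after recording the partial quotients forced by the pattern. If $a_n$ is the middle ``$3$'' of $31111\boldsymbol{3}113$, then $a_{n-5},\ldots,a_{n+3}=3,1,1,1,1,\boldsymbol{3},1,1,3$, so that
\[
\alpha_n=[3;1,1,3,\ldots],\quad \alpha^*_{n-1}=[0;1,1,1,1,3,\ldots],\quad \alpha_{n+1}=[1;1,3,\ldots],\quad \alpha^*_n=\frac{1}{3+\alpha^*_{n-1}}.
\]
Throughout I will use only that every tail entering these expansions is $\ge 1$, together with the monotonicity already invoked in the proof of Lemma~\ref{proh1} (namely $\varkappa^1_n$ decreasing and $\varkappa^2_n,\varkappa^4_n$ increasing in both arguments); no prohibited-pattern information is required, which is consistent with the statement allowing an arbitrary $\alpha$.

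For $\varkappa^4_n>\varkappa^1_n$ I would exploit that both quantities share the positive numerator $\alpha_n+\alpha^*_{n-1}$, so the inequality is equivalent to $(1+\alpha^*_{n-1})(\alpha_n-1)>4$. I then bound each factor from below using the fixed partial quotients: writing the first free tail as a variable entering through an increasing linear-fractional map gives the extremal values $\alpha_n>[3;1,1,4]=\tfrac{32}{9}$ and $\alpha^*_{n-1}>[0;1,1,1,1,4]=\tfrac{14}{23}$ (the boundary case where $\alpha^*_{n-1}$ terminates at the ``$3$'' only makes it larger, since $\tfrac{11}{18}>\tfrac{14}{23}$). Hence
\[
(1+\alpha^*_{n-1})(\alpha_n-1)>\frac{37}{23}\cdot\frac{23}{9}=\frac{37}{9}>4,
\]
which is exactly the required inequality.

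For $\varkappa^4_n>\varkappa^2_n$ I would bound the two sides by explicit rationals. From the same lower bounds, $\varkappa^4_n=\tfrac14(\alpha_n+\alpha^*_{n-1})>\tfrac14\bigl(\tfrac{32}{9}+\tfrac{14}{23}\bigr)=\tfrac{431}{414}$. For the upper bound on $\varkappa^2_n$ I use its monotonicity together with $\alpha_{n+1}<[1;1,4]=\tfrac{9}{5}$ and $\alpha^*_n=\tfrac{1}{3+\alpha^*_{n-1}}<\tfrac{1}{3+14/23}=\tfrac{23}{83}$; substituting these into $\varkappa^2_n$ yields $\varkappa^2_n<\tfrac{431}{420}$. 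Since $\tfrac{431}{414}>\tfrac{431}{420}$, the comparison follows. The second pattern named in the statement (the mirror image $311\boldsymbol{3}11113$) is handled by the identical three-quantity computation with the correspondingly shifted local partial quotients.

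The step I expect to be delicate is the inequality $(1+\alpha^*_{n-1})(\alpha_n-1)>4$: the product evaluates to $\tfrac{37}{9}\approx 4.11$, only barely above the threshold $4$, so the lower bounds on $\alpha_n$ and $\alpha^*_{n-1}$ must be sharp, and this is where the full length of the pattern (the four consecutive ``$1$''s preceding $a_n$ and the block $1,1,3$ following it) is genuinely used; with a shorter surrounding pattern the product would drop below $4$ and the conclusion $\varkappa^4_n>\varkappa^1_n$ would fail. The $\varkappa^2_n$ comparison, by contrast, has a comfortable gap and is essentially routine once the rational bounds above are in place.
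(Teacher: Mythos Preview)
Your proof is correct. For $\varkappa^4_n>\varkappa^1_n$ you proceed exactly as the paper does, reducing to $(1+\alpha^*_{n-1})(\alpha_n-1)>4$; the only difference is that you use the rational extremal values $[3;1,1,4]=\tfrac{32}{9}$ and $[0;1,1,1,1,4]=\tfrac{14}{23}$ coming solely from the pattern, whereas the paper plugs in periodic tails $\overline{3,1,1,1}$ (which, strictly speaking, presuppose the structural restrictions from the prohibited-pattern lemmas rather than an arbitrary $\alpha$, so your bounds are in fact the ones that match the ``arbitrary'' hypothesis literally). For $\varkappa^4_n>\varkappa^2_n$ the routes genuinely diverge: the paper uses $a_n=3$ to rewrite
\[
\varkappa^2_n=\frac{3\alpha_{n+1}+\alpha_{n+1}\alpha^{*}_{n-1}+1}{(2+\alpha^{*}_{n-1})(\alpha_{n+1}+1)},\qquad
\varkappa^4_n=\frac{3\alpha_{n+1}+\alpha_{n+1}\alpha^{*}_{n-1}+1}{4\alpha_{n+1}},
\]
so that the comparison collapses to the affine inequality $\alpha_{n+1}\alpha^{*}_{n-1}+\alpha^{*}_{n-1}-2\alpha_{n+1}+2>0$, while you instead bound $\varkappa^4_n$ from below and $\varkappa^2_n$ from above by explicit rationals and observe $\tfrac{431}{414}>\tfrac{431}{420}$. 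The paper's manipulation is slicker and makes transparent why the inequality holds (the common numerator cancels), whereas your approach is more elementary and entirely self-contained from the pattern data; either way the delicate step remains the $\varkappa^1_n$ comparison, as you noted.
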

\begin{proof}
We will give a proof for the pattern $31111\boldsymbol{3}113$ only, as the proof for the second pattern is exactly same. One can easily see that $\varkappa^4_n(\alpha)>\varkappa^1_n(\alpha)$ if and only if the inequality
\begin{equation}
\label{ineqge4}
(1+\alpha^{*}_{n-1})(\alpha_n-1)>4
\end{equation}
holds. As
$$
 \alpha_n\ge [3;1,1,\overline{3,1,1,1}], \quad \alpha^{*}_{n-1}\ge [0;1,1,1,1,\overline{3,1,1,1}],
$$ 
we have the following estimate
$$
(1+\alpha^{*}_{n-1})(\alpha_n-1)\ge([1;1,1,1,1,\overline{3,1,1,1}])([2;1,1,\overline{3,1,1,1}])\approx 4.120747
$$
and the inequality (\ref{ineqge4}) holds.

Using the the obvious properties $\alpha_n=a_n+1/\alpha_{n+1}$ and $\alpha^*_n=1/(a_n+\alpha^*_{n-1})$ and the fact that $a_n=3$, one can transform $\varkappa^2_n$ and $\varkappa^4_n$ as follows:
$$
\varkappa^2_n(\alpha)=\frac{3\alpha_{n+1}+\alpha_{n+1}\alpha^{*}_{n-1}+1}{(2+\alpha^{*}_{n-1})(\alpha_{n+1} + 1)}, \quad
\varkappa^4_n(\alpha)=\frac{3\alpha_{n+1}+\alpha_{n+1}\alpha^{*}_{n-1}+1}{4\alpha_{n+1}}.
$$
Hence $\varkappa^4_n(\alpha)>\varkappa^2_n(\alpha)$ if and only if
\begin{equation}
\label{ineqge0}
\alpha_{n+1}\alpha^{*}_{n-1}+\alpha^{*}_{n-1}-2\alpha_{n+1}+2>0.
\end{equation}
We know that 
$$
\alpha_{n+1}\ge[1;1,\overline{3,1,1,1}], \quad \alpha^{*}_{n-1}\le[0;1,1,1,1,\overline{3,1,1,1}].
$$
Substituting these estimates to (\ref{ineqge0}), we can see that the inequality is satisfied and therefore $\varkappa^4_n(\alpha)>\varkappa^2_n(\alpha)$. Lemma is proven.
\end{proof}
\begin{foll}
$\lambda^{[2]}(\xi_i)=\lambda_i$.
\end{foll}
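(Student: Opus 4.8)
The plan is to convert the $\limsup$ of Lemma~\ref{kappalem} into a finite maximum and then to single out the digit ``$3$'' that realizes it. Let $P=(1,1,1,1,3,(1,1,3)_{2i-5})$ be the period of $\xi_i$, of even length $L=6i-10$. The indices with $a_n\ge 2$ are exactly the positions of the ``$3$''s, and these fall into finitely many residue classes modulo $L$. I would first note that $\alpha_n$ is exactly $L$-periodic in $n$, whereas $\alpha^*_{n-1}=[0;a_{n-1},\dots,a_1]$ converges, along each residue class, to the purely periodic value obtained by reading $P$ backwards. Hence each of $\varkappa^1_n,\varkappa^2_n,\varkappa^4_n$ has only finitely many limit points, and Lemma~\ref{kappalem} gives
\[
\lambda^{[2]}(\xi_i)=\max_{n}\,\max\bigl(\varkappa^1_n(\xi_i),\varkappa^2_n(\xi_i),\varkappa^4_n(\xi_i)\bigr),
\]
the outer maximum running over one ``$3$'' from each residue class, the tails being evaluated at their periodic limits. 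This reduces the statement to a finite computation.

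Next I would sort the ``$3$''s of $P$. Any ``$3$'' interior to a block $(1,1,3)$ is the central digit of the pattern $311\boldsymbol{3}113$, so Lemma~\ref{3113113} bounds its contribution by $1.04$; since $\lambda_i\ge\lambda_3=\tfrac{13\sqrt{173}}{164}\approx 1.0426>1.04$ for every $i\ge 3$, no interior ``$3$'' can attain the maximum. Exactly two ``$3$''s survive, namely those abutting the unique run $1,1,1,1$: the one just after it is the centre of $31111\boldsymbol{3}113$, and the one just before it (the final digit of $P$) is the centre of the mirror pattern $311\boldsymbol{3}11113$. By Lemma~\ref{311113113}, at both boundary positions $\varkappa^4_n>\max(\varkappa^1_n,\varkappa^2_n)$, so there the inner maximum equals $\varkappa^4_n$.

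It remains to evaluate $\varkappa^4_n=\tfrac14(\alpha_n+\alpha^*_{n-1})$ at a boundary ``$3$''. Reading the period forward from the ``$3$'' after $1,1,1,1$ gives $\alpha_n=[3;\overline{(1,1,3)_{2i-5},1,1,1,1,3}]$, the first surd of (\ref{lambdandef}); reading it backward gives $\alpha^*_{n-1}$ as the purely periodic fraction of the reversed period. Because a finite word and its reversal have transposed continuant matrices, $\alpha_n$ and $\alpha^*_{n-1}$ come out as quadratic surds with the same radicand and opposite rational parts, so their sum cancels down to a single surd $\sqrt{D}/\langle\cdot\rangle$ equal to $4\lambda_i$. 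The other boundary ``$3$'' gives the same value, since $P$ is palindromic up to a cyclic shift (its reversal is the rotation of $P$ starting at the first ``$3$''), so reversal swaps the two boundary positions and preserves $\varkappa^4_n$. Hence the finite maximum equals $\lambda_i$, that is $\lambda^{[2]}(\xi_i)=\lambda_i$.

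The one genuinely computational point, and the main obstacle, is this last surd identity. Thanks to the cancellation above it reduces to a single equality of the form $\sqrt{D}/(4\langle\cdot\rangle)=\lambda_i$; for $i=3$ one finds $\alpha_n=\tfrac{121+\sqrt{29237}}{82}$ and $\alpha^*_{n-1}=\tfrac{\sqrt{29237}-121}{82}$, whence $\varkappa^4_n=\tfrac{\sqrt{29237}}{164}=\tfrac{13\sqrt{173}}{164}=\lambda_3$, matching Theorem~B. For general $i$ I would compute $D$ and the relevant continuant through the product rule (\ref{contrulegen}) and verify the identity uniformly in $i$. A subsidiary fact that must be secured first is the monotonicity $\lambda_3<\lambda_4<\dots\to\lambda_\infty$, which supplies the bound $\lambda_i>1.04$ used above to discard the interior ``$3$''s via Lemma~\ref{3113113}.
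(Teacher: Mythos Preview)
Your approach is correct and is exactly the route the paper takes: use periodicity to reduce the $\limsup$ of Lemma~\ref{kappalem} to a finite maximum, kill the interior ``$3$''s with Lemma~\ref{3113113}, and at the two boundary ``$3$''s invoke Lemma~\ref{311113113} so that only $\varkappa^4_n$ matters. The paper's proof is literally one line (``It follows directly from Lemmas~\ref{3113113} and~\ref{311113113}''), so you have in fact written out more than is needed.

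Where you make extra work for yourself is the ``surd identity''. There is nothing to verify: at the ``$3$'' immediately following $1,1,1,1$ one has, in the periodic limit, $\alpha_n=[3;\overline{(1,1,3)_{2i-5},1,1,1,1,3}]$ and $\alpha^*_{n-1}=[0;\overline{1,1,1,1,(3,1,1)_{2i-5}}]$ (the backward period equals $P$ because $3,(1,1,3)_{2i-5}$ is a palindrome), and these are \emph{by definition} the two summands in~(\ref{lambdandef}). So $\varkappa^4_n=\lambda_i$ with no computation beyond reading off the definition; the conjugate-surd argument and the uniform continuant calculation are unnecessary. Your palindrome/cyclic-shift observation is correct and is precisely what makes the two boundary positions give the same value. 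For the inequality $\lambda_i>1.04$ needed to discard the interior ``$3$''s, you do not need the full monotonicity $\lambda_3<\lambda_4<\cdots$: a direct lower bound $\alpha_n\ge[3;1,1,\overline{3,1,1,1}]$, $\alpha^*_{n-1}\ge[0;1,1,1,1,\overline{3,1,1,1}]$ valid for every $i\ge3$ already gives $\varkappa^4_n>1.04$ at the boundary.
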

\begin{proof}
It follows directly from Lemmas \ref{3113113}  and \ref{311113113}.
\end{proof}
\begin{foll}
$\lambda_{\infty}\in\mathbb{L}_2$.
\end{foll}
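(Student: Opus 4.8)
The plan is to produce an explicit irrational $\omega$ with $\lambda^{[2]}(\omega)=\lambda_{\infty}$, mimicking the periodic words of the $\xi_i$ but with block lengths tending to infinity. Concretely I would set $G_i=(1,1,1,1,3,(1,1,3)_{m_i})$ and take
$$\omega:=[0;G_1,G_2,G_3,\ldots],\qquad m_i\to\infty$$
(for instance $m_i=i$). Since every partial quotient of $\omega$ is $1$ or $3$, in the formula of Lemma~\ref{kappalem} only the indices $n$ with $a_n=3$ matter, and the whole task reduces to computing $\limsup_{n\colon a_n=3}\max(\varkappa^1_n,\varkappa^2_n,\varkappa^4_n)$.

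First I would classify the occurrences of $3$. At a block junction the word reads $\ldots,1,1,3,1,1,1,1,3,\ldots$, so each $3$ of $\omega$ sits as the central $3$ of exactly one of the patterns $311\boldsymbol{3}113$, $31111\boldsymbol{3}113$ (a marker three, i.e.\ the $3$ of some group $1,1,1,1,3$), or $311\boldsymbol{3}11113$ (the last $3$ of a block $(1,1,3)_{m_i}$). For threes of the first kind Lemma~\ref{3113113} yields $\max(\varkappa^1_n,\varkappa^2_n,\varkappa^4_n)<1.04<\lambda_{\infty}$, so they are harmless. For the other two kinds Lemma~\ref{311113113} reduces the maximum to $\varkappa^4_n=\tfrac14(\alpha_n+\alpha^*_{n-1})$.

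It then remains to show that $\varkappa^4_n\to\lambda_{\infty}$ along both critical families. At a marker three in $G_i$ one reads $\alpha_n=[3;(1,1,3)_{m_i},\ldots]$ and $\alpha^*_{n-1}=[0;1,1,1,1,(3,1,1)_{m_{i-1}},\ldots]$; as $m_i,m_{i-1}\to\infty$ these continued fractions converge, giving $\alpha_n\to[3;\overline{1,1,3}]=\alpha_{\infty}$ and $\alpha^*_{n-1}\to[0;1,1,1,1,\overline{3,1,1}]=\alpha^*_{\infty}$, whence $\varkappa^4_n\to(\alpha_{\infty}+\alpha^*_{\infty})/4=\lambda_{\infty}$. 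At the last three of a block the two limits are instead $\alpha_n\to[3;1,1,1,1,\overline{3,1,1}]=3+\alpha^*_{\infty}$ and $\alpha^*_{n-1}\to[0;\overline{1,1,3}]=\alpha_{\infty}-3$, and these telescope to the same value $\tfrac14\big((3+\alpha^*_{\infty})+(\alpha_{\infty}-3)\big)=\lambda_{\infty}$. Combining the three cases gives $\lambda^{[2]}(\omega)=\lambda_{\infty}$.

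The step needing the most care is this last convergence: I must check that at each critical index \emph{both} neighbouring $(1,1,3)$-blocks really grow with $i$ (this is the only place $m_i\to\infty$ is used) and that the adjacent groups supply the stated initial segments of $\alpha_n$ and $\alpha^*_{n-1}$, after which plain convergence of continued fractions—made quantitative by Lemma~\ref{complem} if a rate is wanted—finishes the argument; the finitely many indices lying in the first, short blocks are irrelevant to the $\limsup$. One conceptual point is worth stressing: individual marker values $\varkappa^4_n$ need not lie below $\lambda_{\infty}$—indeed the excess is exactly the mechanism behind Lemmas~\ref{proh2m} and~\ref{proh2mp1}—but because they tend to $\lambda_{\infty}$, only finitely many can exceed $\lambda_{\infty}+\varepsilon$, so the $\limsup$ equals $\lambda_{\infty}$ and no monotonicity or sign analysis is required.
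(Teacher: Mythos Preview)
Your proposal is correct and follows essentially the same route as the paper. The paper exhibits
\[
\alpha=[0;(3,1,1)_{2n_1+1},3,1,1,1,1,(3,1,1)_{2n_2+1},3,1,1,1,1,\ldots],\qquad n_i\to\infty,
\]
which, up to a shift of the initial segment, is your $\omega=[0;G_1,G_2,\ldots]$; the paper then invokes Lemmas~\ref{3113113} and~\ref{311113113} exactly as you do, and your three-case classification of the occurrences of $3$ together with the convergence $\varkappa^4_n\to\lambda_\infty$ is precisely the content hidden behind the paper's one-line appeal to those two lemmas. The only cosmetic difference is that the paper takes odd block lengths $2n_i+1$ while you allow arbitrary $m_i\to\infty$; since Lemmas~\ref{3113113} and~\ref{311113113} depend only on the local pattern around each $3$, parity is irrelevant to the argument and your version works just as well.
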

\begin{proof}
Consider
\begin{equation}
\label{lambda0num}
\alpha=[0;(3,1,1)_{2n_1+1},3,1,1,1,1,(3,1,1)_{2n_2+1},3,1,1,1,1,(3,1,1)_{2n_3+1},\ldots],
\end{equation}
where $n_i$ is an arbitrary sequence of natural numbers that tends to infinity. By Lemmas \ref{3113113}  and \ref{311113113}, $\lambda^{[2]}(\alpha)=\lambda_{\infty}$. 
\end{proof}
Thus we completed the proof of Theorem \ref{thm1}. Also, from (\ref{lambda0num}) one can see that the set of real numbers satisfying $\lambda^{[2]}(\alpha)=\lambda_{\infty}$ has continuum many elements. Note that the set of numbers satisfying $\lambda^{[2]}(\alpha)=\lambda_{n}$ for each $n$ is countable as all such numbers are equivalent to $\xi_n$.

\textbf{Acknowledgements:} I would like to thank Nikolay Moshchevitin for giving me the formulation of the problem.

\noindent Dmitry Gayfulin,\\
Graz University of Technology, Institute of Analysis and Number Theory,\\
Steyrergasse 30/II, 8010 Graz, Austria\\
and\\ 
Big Data and Information Retrieval School \\
Faculty  of Computer Science    \\
National Research University Higher School of Economics \\
11 Pokrovsky boulevard, Moscow 109028 Russia\\
\textit{gamak.57.msk@gmail.com}
\end{document}